\documentclass[12pt]{amsart}

\usepackage{amsmath, amssymb, amsthm}
\usepackage{mathrsfs}
\usepackage{hyperref}

\newtheorem{proposition}{Proposition}
\newtheorem{theorem}{Theorem}
\newtheorem{lemma}{Lemma}
\newtheorem{corollary}[theorem]{Corollary}
\theoremstyle{remark}
\newtheorem{remark}{Remark}
\theoremstyle{definition}
\newtheorem{definition}{Definition}[section]

\newcommand{\R}{\mathbb R}

\title[Slicing Support Function]{Slicing Support Functions with Recovery Formula and Curvature Identities}

\author{Yen-Chang Huang}

\address{Department of Applied Mathematics, National Yang Ming Chiao Tung University, Hsinchu City, 30010, Taiwan}

\email{ychuang0802@nycu.edu.tw}
\thanks{The work was supported by the National Science and Technology Council (NSTC), Taiwan, under grant number NSTC 112-2115-M-A49-014-MY3 and 115-2115-M-A49-001-MY2}

\begin{document}
\begin{abstract}
Let $K\subset\mathbb{R}^n$ be a convex body with support function $h_K$.
For $\nu\in\mathbb{S}^{n-1}$, $p\in\mathbb{R}$, and $u\in\nu^\perp$, we
introduce the slicing support function $h_\nu(u,p)$, defined as the support
function of the slice $K\cap\{x\cdot\nu=p\}$ in the direction $u$. For each
fixed $p$, this is precisely the support function of the corresponding
translated fiber appearing in the construction of the convex fiber body of
Mathis and Meroni \cite{MathisMeroni2023}. We derive an infimal
representation of $h_\nu$ in terms of $h_K$, together with a corresponding
minimax identity. Using the Fenchel--Moreau theorem, we prove that $h_K$,
and hence $K$, can be recovered from the slicing support function without
any regularity assumption on $\partial K$. We also obtain a differential
recovery formula when $K$ is strictly convex and $\partial K$ is of class
$C^1$.

In dimension three, we establish a cylindrical Monge--Amp\'{e}re-type
determinant identity expressed in terms of the spherical curvature matrix of
$\partial K$. When the relevant tangent directions are principal directions,
this determinant reduces to a weighted ratio of the corresponding principal
radii of curvature. We further characterize this principal-direction
condition by showing that, for convex bodies with $C^2$-boundary and positive
Gaussian curvature, the spherical coordinate directions are principal
directions away from the poles if and only if, up to translation, the body is
a body of revolution. Finally, we extend the construction to
higher-codimensional iterated slicing support functions and derive a
full-Hessian determinant identity via the Schur complement.
\end{abstract}
\maketitle

\section{Introduction}\label{sec1}
Let \(K\subset\mathbb{R}^n\) be a convex body. Its support function is defined by
\[
h_K(v)=\sup_{x\in K} x\cdot v,
\qquad v\in\mathbb{R}^n.
\]
It is positively homogeneous of degree one and is therefore determined by its
restriction \(\bar h_K:=h_K|_{\mathbb{S}^{n-1}}\). Support functions provide a
standard analytic representation of convex bodies. If \(\partial K\) is of
class \(C^2\) and has positive Gauss curvature, then the spherical curvature
matrix
\begin{align}\label{sphericalcurvature1}
Q_{\bar h_K}
=
\nabla_{\mathbb{S}^{n-1}}^2\bar h_K+\bar h_K I
\end{align} is positive definite, and its eigenvalues are the principal radii of curvature
of \(\partial K\), where $\nabla^2_{\mathbb{S}^{n-1}}$ is the spherical Hessian. This matrix plays a fundamental role in the classical
Minkowski problem and its Monge--Amp\`ere formulation; see
\cite{schneider,ChengYau1976,CaffarelliNirenbergSpruck1984,TrudingerWang2008}.

Sections and projections of convex bodies form another central theme in convex geometry and geometric tomography. Gardner's geometric tomography studies the recovery of convex bodies from lower-dimensional information such as projections or cross-sections \cite{Gardner}. Many classical problems describe sections through scalar quantities, such as their volumes or associated section functions, leading to fundamental results including the Busemann--Petty problem and its variants; see Ball~\cite{Ball1988}, Gardner--Koldobsky--Schlumprecht~\cite{GardnerKoldobskySchlumprecht1999}, and Koldobsky~\cite{Koldobsky}. More recent work has developed related themes involving fibers, sections, and Monge--Amp\'{e}re structures. For instance, Hoehner and Xing studied new fiber-type combinations of convex bodies \cite{HoehnerXing2025}, while recent results on sections and projections have continued to investigate volume-related problems \cite{Tziotziou2026,MyroshnychenkoTangTatarkoTkocz2026,HaddadRyabogin2026}. Connections among support functions, geometric measures, and Monge--Amp\'{e}re equations have also been studied in \cite{HugMussnigUlivelli2026,HuangYangZhang2025}.

However, reducing a section to scalar data  discards much of its geometric structure. In particular, information describing how the section is positioned and shaped within the ambient body is largely lost. The present paper takes a different perspective: instead of providing a single value to each section, we preserve directional information on every slice. This leads to the notion of the slicing support function, which records finer geometric information about the family of sections and allows us to recover the support function of the original convex body through an optimization-based reconstruction formula (see \eqref{recov1} and \eqref{recov2} below).

Fiber polytopes were introduced by Billera and Sturmfels \cite{BilleraSturmfels1992}, and the construction was later extended to general convex bodies by Mathis-Meroni \cite{MathisMeroni2023}. More precisely, let $V \subset \mathbb{R}^{a+b}$ be an $a$-dimensional subspace, let $\pi : \mathbb{R}^{a+b} \to V$ be the orthogonal projection, and write $\mathbb{R}^{a+b} = V \oplus V^\perp$. For a convex body $K \subset \mathbb{R}^{a+b}$ and $x \in \pi(K)$, the fiber over $x$ is
\[
K_x := \{ y \in V^\perp : x+y \in K \};
\]
see \cite{MathisMeroni2023} for more detail. A measurable map $\gamma : \pi(K) \to V^\perp$ is called a section if $\gamma(x) \in K_x$ for every $x \in \pi(K)$. The fiber body of $K$ with respect to $\pi$ is then defined by
\[
\Sigma_\pi(K) := \left\{ \int_{\pi(K)} \gamma(x) \, dx : \gamma \text{ is a section} \right\},
\]
where the integral is in the sense of Minkowski-Aumann \cite{aumann1965integrals, minkowski1989volumen}, and the integration is taken with respect to the Lebesgue measure on $V$.

In this paper, we investigate the support function defined on the translated fiber $K_p^\nu$ (defined below) and its fundamental properties and connection with Monge–Amp\'ere-type equations. Given a convex body $K\subset \mathbb{R}^n$ and fix $\nu \in \mathbb{S}^{n-1}$. Set
\begin{align}\label{height}
m := \min_{x \in K} x \cdot \nu,  \qquad M:= \max_{x \in K} x \cdot \nu.
\end{align}
Consider the affine slice
\[
C(\nu, p) := K \cap \{ x \in \mathbb{R}^n : x \cdot \nu = p \},
\]
and its translation to the orthogonal complement $\nu^\perp$,
\[
K_p^\nu := \{ y \in \nu^\perp : p\nu + y \in K \}.
\]
Note that $C(\nu, p)\neq \emptyset$ when $p\in [m, M]$ and in this case $C(\nu, p) = p\nu + K_p^\nu$. We define, for any $u \in \nu^\perp$, the \emph{slicing support function}
\begin{equation}\label{intro0}
h_\nu(u, p) := \sup_{x \in C(\nu,p)} x \cdot u = \sup_{y \in K_p^\nu} y \cdot u = h_{K_p^\nu}(u).
\end{equation}
Thus, for each fixed $p$, $h_\nu(\cdot, p)$ is precisely the support function of the translated fiber $K_p^\nu$. Note that, by definition, the fiber body \(\Sigma_\pi(K)\) combines the entire family of fibers through Minkowski--Aumann integration, whereas the slicing support function records the geometry of each fiber separately. For instance, under suitable regularity assumptions, the boundary of \(C(\nu,p)\) can be recovered from \(h_\nu(\cdot,p)\) and its derivatives.

Let us relate the slicing support function to the fiber body associated with the projection $\pi_\nu$. Let $\Sigma_{\pi_\nu}(K) \subset \nu^\perp$ denote the fiber body of $K$ associated with $\pi_\nu(x) = (x \cdot \nu)\nu$ for all $x\in \mathbb{R}^n$.
Identifying $\operatorname{span}\{\nu\}$ with $\mathbb{R}$ through $p\nu \leftrightarrow p$, and following the unnormalized convention of \cite{MathisMeroni2023}, the  fiber body $\Sigma_{\pi_\nu}(K)$ is given by the Minkowski-Aumann integral
\[
\Sigma_{\pi_\nu}(K) = \int_{m}^{M} K_p^\nu \, dp.
\]
By \cite[Proposition~2.7]{MathisMeroni2023}, its support function satisfies
\[
h_{\Sigma_{\pi_\nu}(K)}(u) = \int_{m}^{M} h_{K_p^\nu}(u) \, dp, \qquad u \in \nu^\perp.
\]
In view of \eqref{intro0}, \(h_\nu\) and \(\Sigma_{\pi_\nu}(K)\) are related by the identity
\begin{equation}\label{eq:fiber-body-slicing}
h_{\Sigma_{\pi_\nu}(K)}(u) = \int_{m}^{M} h_\nu(u, p) \, dp.
\end{equation}

Our first result, Proposition \ref{basic}, expresses $h_\nu(u, p)$ in terms of the ambient support function $h_K$:
\begin{align}\label{introo1}
h_\nu(u,p)
=
\inf_{\lambda\in\mathbb R}
\{h_K(u+\lambda\nu)-\lambda p\}.
\end{align}
Combining \eqref{introo1} with \eqref{eq:fiber-body-slicing} yields
\begin{equation}\label{link2}
h_{\Sigma_{\pi_\nu}(K)}(u) = \int_{m}^{M} \inf_{\lambda \in \mathbb{R}} \bigl\{ h_K(u + \lambda\nu) - \lambda p \bigr\} \, dp.
\end{equation}
Equation~\eqref{eq:fiber-body-slicing} shows that the slicing support function is a fiberwise refinement of the support function of the fiber body; equation~\eqref{link2} further relates the support function of the fiber body directly to the support function of the original convex body $K$.

In addition, in Proposition \ref{minimax2} we show that a minimax identity for $h_\nu$ can be derived (as shown in \eqref{infsup})
\begin{align*}
h_\nu(u,p)
=
\sup_{x \in K}
\inf_{\lambda \in \mathbb{R}} L(x,\lambda)
=
\inf_{\lambda \in \mathbb{R}}
\sup_{x \in K} L(x,\lambda),
\end{align*}
where $L(x,\lambda)
:=
x \cdot u + \lambda (x \cdot \nu - p)$.
The function $L(x, \lambda)$ has a geometric meaning: the term $x\cdot u$ measures the height of $x$ along the direction $u$ of the slice, while $x\cdot\nu-p$ measures the signed deviation from the slicing hyperplane $\{x\cdot\nu=p\}$. Thus, $\lambda$ acts as a penalty that forces $x$ onto $\{x\cdot \nu=p\}$ during optimization.

Since $h_\nu$ can be derived from the original support function $h_K$ of the convex body $K$ by \eqref{introo1}, a natural converse question is whether $h_K$ can be recovered from the slicing support functions on each slice. We show the answer is positive in Theorem \ref{thm:recovery} without any assumption of boundary regularity of $K$. More precisely, for each fixed \(u\), the representation \eqref{introo1} identifies \(h_\nu(u,\cdot)\) with the negative Legendre-Fenchel transform of the one-variable convex function
\[
H_u(\lambda)=h_K(u+\lambda\nu);
\]
(see equation \eqref{temp2}). Consequently, the Fenchel-Moreau theorem yields the recovery formula
\begin{align}\label{recov1}
h_K(u+\lambda\nu)
=
\sup_{p\in\mathbb R}\{h_\nu(u,p)+\lambda p\}
\end{align}
as shown in \eqref{eq:recovery}. Thus, the relation between \(h_K\) and \(h_\nu\) is governed by Legendre--Fenchel duality and the Fenchel-Moreau biconjugation theorem; see Rockafellar~\cite{Rockafellar} and Z\u{a}linescu~\cite{zalinescu2002} for detailed accounts of these theorems.

Since \(h_K\) determines \(K\) through
\[
K=\bigcap_{u\in\mathbb{S}^{n-1}}
\{x\in\mathbb{R}^n:x\cdot u\leq h_K(u)\},
\]
the recovery formula \eqref{recov1} has two applications. First, the inclusion of one convex body in another can be characterized by comparing their slicing support functions; see Corollary~\ref{thm:order} and Corollary~\ref{cor:lossless}. Second, it yields a relation between slicing support functions and partial supremal convolution, analogous to the classical relation between support functions and Minkowski addition; see Corollary ~\ref{thm:minkowski}.

% Fix \(\nu\in \mathbb S^{n-1}\) and let $\nu^\perp=\{x\in \mathbb{R}^n,x\cdot \nu=0 \}$ be the orthogonal complement of $\nu$ in $\mathbb{R}^n$. For a slice height \(p\in\mathbb R\), consider the hyperplane section
% \[
% C(\nu,p)=K\cap\{x\in\mathbb R^n:x\cdot\nu=p\}.
% \]
% For any vector \(u\in \nu^\perp\), we define the slicing support function by
%\begin{align}\label{intro0}
%h_\nu(u,p)=\sup_{x\in C(\nu,p)}x\cdot u.
%\end{align}
% Thus, instead of assigning to each slice only a scalar quantity such as volume, \(h_\nu\) records the support function of the slice itself and in this paper we will study the properties of $h_\nu$.

% As the first result, we derive the exact representation for the slicing support function in terms of the original support function $h_K$, namely,
% \begin{align}\label{introo1}
% h_\nu(u,p)
% =
% \inf_{\lambda\in\mathbb R}
% \{h_K(u+\lambda\nu)-\lambda p\}
% \end{align}

One also obtains a differential-type recovery formula when the boundary $\partial K$ has better regularity and $K$ satisfies an appropriate convexity assumption. In Theorem \ref{thm:recovery-strict}, we assume that $\partial K$ is of class $C^1$ and that $K$ is strictly convex. Then, for each fixed $u\in \nu^\perp$ and $p\in (m, M)$, the slicing support function $h_\nu$ derived in \eqref{introo1} attains the unique minimizer $\lambda=\lambda(u,p)$ satisfying $\partial_p h_\nu(u,p)=-\lambda(u,p)$, and  the support function $h_K$ can be recovered from $h_\nu$ by the formula
\begin{align}\label{recov2}
h_K\bigl(u-\partial_p h_\nu(u,p)\,\nu\bigr)
=
h_\nu(u,p)-p\,\partial_p h_\nu(u,p).
\end{align}

If $K$ is strictly conves in $\mathbb{R}^3$ with $C^2$-boundary and positive Gaussian curvature, we derive a Monge--Amp\`ere-type identity. Let $e_1,e_2$ be an orthonormal basis of $\nu^\perp$, and write the radial direction in the slice as
$u(\theta)=(\cos\theta)e_1+(\sin\theta)e_2.$ Then the slicing support function can be viewed as
\[
H(\theta,p):=h_\nu(u(\theta),p)
\]
on the cylinder $\mathbb{S}^1\times I_\nu$, where $I_\nu$ denotes the interior of the projection interval of $K$ in the $\nu$-direction. We consider
\begin{align}\label{determ1}
\mathcal A[H]
:=
\det
\begin{pmatrix}
H_{\theta\theta}+H & H_{\theta p}\\
H_{\theta p} & H_{pp}
\end{pmatrix},
\end{align}
where subscripts denote partial derivatives with respect to the corresponding variables.

On the ambient side, write
$F(\theta,\lambda):=h_K(u(\theta)+\lambda\nu)$, and denote the corresponding minimizing parameter by
$\lambda=\lambda(\theta,p)$. With this notation, Theorem~\ref{cylinder} gives
\begin{align}\label{intro2}
\mathcal A[H]
=
-\frac{F_{\theta\theta}+F-\lambda F_\lambda}{F_{\lambda\lambda}},
\end{align}
where the right-hand side is evaluated at the minimizer. Furthermore,
Corollary~\ref{prop:second-derivation-rhs} expresses \eqref{intro2} in terms
of the spherical curvature matrix $Q_{\bar h_K}$ defined in
\eqref{sphericalcurvature1}. More precisely,
\begin{align}\label{operatorA}
\mathcal A[H]
=
-(1+\lambda^2)\,
\frac{Q_\omega[u^\perp,u^\perp]}
     {Q_\omega[\tau,\tau]},
\end{align}
where $Q_\omega:=Q_{\bar h_K}(\omega)$ is the spherical curvature matrix at
\[
\omega=\frac{u(\theta)+\lambda\nu}
{|u(\theta)+\lambda\nu|},
\]
and $u^\perp,\tau$ are the corresponding orthogonal tangent vectors of
$\mathbb{S}^2$ at $\omega$; the detailed notation is given in
Section~\ref{sec4}.

In particular, if $u^\perp$ and $\tau$ are principal directions at
$\omega$, then \eqref{operatorA} reduces to a weighted ratio of the
corresponding principal curvatures; see
\eqref{eq:principal-curvature-ratio}. Thus, the second-order structure
$\mathcal A[H]$ directly reflects the local curvature geometry of
$\partial K$.

Moreover, Proposition~\ref{prop:principal-revolution} characterizes when
$u^\perp$ and $\tau$ are principal directions. More precisely, if
$K\subset\mathbb{R}^3$ is a convex body with $C^2$-boundary and positive
Gaussian curvature, then $u^\perp$ and $\tau$ are principal directions at
every $\omega\in\mathbb{S}^2\setminus\{\pm\nu\}$ if and only if, up to
translation, $K$ is a body of revolution about an axis parallel to $\nu$.
Consequently, for such a body of revolution, $\mathcal A[H]$ is everywhere
given by a weighted ratio of the corresponding principal curvatures.

As a final result, we introduce successive slicing support functions. These are obtained by iterating the slicing procedure within sections of lower dimension of $K\subset \mathbb R^n$. First, fix a unit vector $\nu_1\in \mathbb{S}^{n-1}$ and a height $p_1$, and consider the affine hyperplane
$H_{\nu_1,p_1}=\{x\in \mathbb{R}^n:x\cdot \nu_1=p_1\}$.
On the section $K\cap H_{\nu_1,p_1}$, the slicing support function $h_{\nu_1}^{(1)}$ is defined by \eqref{intro0}, with $C(\nu, p)$ replaced by $K \cap H_{\nu_1, p_1}$. Next, choose a unit vector $\nu_2\in \nu_1^\perp$, so that $\nu_2$ gives a direction in the hyperplane $H_{\nu_1,p_1}$. Since every section of a convex body is again convex in the corresponding lower-dimensional affine subspace, we may slice $K\cap H_{\nu_1,p_1}$ once more by the affine hyperplane
$H_{\nu_2,p_2}=\{x\in \mathbb{R}^n:x\cdot \nu_2=p_2\}$.
The resulting section $K\cap H_{\nu_1,p_1}\cap H_{\nu_2,p_2}$
is again convex, now lying in an affine subspace of codimension two. Hence one can define a second slicing support function, denoted by
$h_{(\nu_1,\nu_2)}^{(2)}$ on this new section. Continuing this procedure, we obtain \textit{the $k$-th slicing support function}
\begin{align}\label{iteradef}
h_{\boldsymbol{\nu}_k}^{(k)}(u, \mathbf{p}_k) = \sup_{x \in K^{(k)}(\boldsymbol{\nu}_k, \mathbf{p}_k)} x \cdot u, \quad \text{for } u \in E_{n-k},
\end{align}
where
$\nu_1,\cdots, \nu_k\in \mathbb{S}^{n-1}$ are orthogonal unit vectors, $\boldsymbol{\nu}_k=(\nu_1,\cdots, \nu_k)$, $\mathbf{p}_k=(p_1, \cdots, p_k)\in \mathbb{R}^k$,
$E_{n-k}$ is the orthogonal complement of the space $V_k=span\{\nu_1,\cdots, \nu_k \}$ in $\mathbb{R}^n$, and
\begin{align}\label{ktimesslide}
K^{(k)}(\boldsymbol{\nu}_k, \mathbf{p}_k) = K \bigcap_{j=1}^k \{x \in \mathbb{R}^n : x \cdot \nu_j = p_j\}.
\end{align}
Note that when $k=1$, the slicing support function defined in \eqref{iteradef} coincides with the one defined in \eqref{intro0}. Further details are given in the paragraph preceding Definition \ref{defiterate}.

Although the definition in \eqref{iteradef} is constructed by iterating the geometric slicing procedure, it admits a more useful representation. Theorem \ref{iteratedrepre} expresses the $k$-th slicing support function in terms of the ambient support function $h_K$ as
\begin{equation} \label{eq:global_inf}
h_{{\boldsymbol{\nu}}_k}^{(k)}(u, \mathbf{p}_k) = \inf_{\boldsymbol{\lambda} \in \mathbb{R}^k} \left( h_K\left(u + \sum_{j=1}^k \lambda_j \nu_j\right) - \sum_{j=1}^k \lambda_j p_j \right).
\end{equation}
Thus, the $k$-th slicing support function has the same dual representation as the first slicing support function in \eqref{introo1}, with the real-valued parameter $\lambda$ replaced by the vector-valued parameter $\boldsymbol{\lambda}=(\lambda_1,\ldots,\lambda_k)$.

One application of the representation formula for the $k$-th slicing support function in \eqref{eq:global_inf} is that it reveals an analogous Monge-Amp\`ere structure in the higher-codimensional setting. In Theorem \ref{fullhessian}, we use this representation to derive a determinant identity for the full Hessian of $h_{\boldsymbol{\nu}_k}^{(k)}$, extending the three-dimensional identity \eqref{intro2} to higher-dimensional spaces.

%this is the setup section
\section{Definition and Basic Properties of Slicing Support Functions}\label{sec2}

Let $K \subset \mathbb{R}^n$ be a convex body.
For any unit vector $\nu \in \mathbb{S}^{n-1}$ and any $p \in \mathbb{R}$, we define the slicing hyperplane orthogonal to $\nu$ at height $p$ by
\[
H_{\nu, p} := \{ x \in \mathbb{R}^n \; : \; x \cdot \nu = p \},
\]
where $x\cdot \nu$ represents the usual Euclidean inner product. The corresponding slice (hyperplane section) of $K$ by $H_{\nu,p}$ is defined as
\[
C(\nu,p) := K \cap H_{\nu, p};
\]
it is understood that $C(\nu,p)$ may be empty. Denote by $\nu^\perp$ the subspace perpendicular to $\nu$. Let $u \in \nu^\perp$, the direction space of the affine hyperplane $H_{\nu,p}$; equivalently, $u \cdot \nu = 0.$
The \emph{slicing support function} of $K$ on the slice $C(\nu,p)$ in the direction $u$ is defined by
\begin{equation}\label{def1}
h_\nu(u,p)
=
\sup_{x \in C(\nu,p)} x \cdot u
\end{equation}
with the convention $h_\nu (u,p)=-\infty$ if $C(\nu,p)=\emptyset$. We have the infimal representation of $h_\nu$ from $h_K$.

\begin{proposition}\label{basic}
For any fixed \(p\in\mathbb{R}\) and any \(u\in\nu^\perp\), the slicing
support function satisfies
\begin{equation}\label{infhk}
h_\nu(u,p)
=
\inf_{\lambda\in\mathbb{R}}
\bigl\{h_K(u+\lambda\nu)-\lambda p\bigr\},
\end{equation}
where \(h_K\) denotes the support function of \(K\).
\end{proposition}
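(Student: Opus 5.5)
The plan is to prove the two inequalities separately, and to handle the case $C(\nu,p)=\emptyset$ separately.

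\emph{Weak duality ($\le$).} Take any $x\in C(\nu,p)$ and any $\lambda\in\mathbb R$. Since $x\cdot\nu=p$,
\[
x\cdot u = x\cdot(u+\lambda\nu)-\lambda p \le h_K(u+\lambda\nu)-\lambda p .
\]
Take the supremum over $x$ and then the infimum over $\lambda$. This gives $h_\nu(u,p)\le\inf_\lambda\{\cdots\}$. It holds trivially when the slice is empty, since then the left side is $-\infty$.

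\emph{Empty slice.} If $p>M$, then $h_K(u+\lambda\nu)\le h_K(u)+\lambda M$ for $\lambda\ge0$, by sublinearity and $h_K(\nu)=M$. Hence $h_K(u+\lambda\nu)-\lambda p\le h_K(u)-\lambda(p-M)\to-\infty$ as $\lambda\to+\infty$. If $p<m$, the same argument with $\lambda\to-\infty$ works, using $h_K(-\nu)=-m$.

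\emph{Reverse inequality for $p\in[m,M]$.} This is the main step, and I would treat it as a strong-duality statement for the Lagrangian $L(x,\lambda)=x\cdot u+\lambda(x\cdot\nu-p)$ on $K\times\mathbb R$. First I apply a minimax theorem: $K$ is compact and convex, and $L$ is affine in each variable. Sion's theorem then gives
\[
\inf_\lambda\sup_{x\in K}L=\sup_{x\in K}\inf_\lambda L .
\]
Here $\sup_{x\in K}L(x,\lambda)=h_K(u+\lambda\nu)-\lambda p$. Also $\inf_\lambda L(x,\lambda)$ equals $x\cdot u$ when $x\cdot\nu=p$ and $-\infty$ otherwise. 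So the right-hand side is exactly $h_\nu(u,p)$.

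If I want to avoid citing Sion, I would argue directly in the plane. Consider the compact convex set $S=\{(x\cdot\nu,\,x\cdot u):x\in K\}\subset\mathbb R^2$, and set $c=h_\nu(u,p)=\max\{t:(p,t)\in S\}$. For each $\varepsilon>0$, the point $(p,c+\varepsilon)$ lies outside $S$, so it can be strictly separated from $S$ by a line. The separating functional has the form $(s,t)\mapsto \alpha s+\beta t$. Because $p\in[m,M]$ lies in the projection of $S$ onto the first coordinate, the separating line cannot be vertical, so $\beta>0$. Normalizing $\beta=1$ and putting $\lambda=\alpha$ yields
\[
x\cdot u+\lambda x\cdot\nu < c+\varepsilon+\lambda p \quad\text{for all } x\in K ,
\]
that is, $h_K(u+\lambda\nu)-\lambda p\le c+\varepsilon$. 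Letting $\varepsilon\to0$ completes the proof.

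The delicate point is ruling out $\beta=0$ (a vertical separator) in the boundary cases $p=m$ or $p=M$. There the separation point should be chosen as $(p,c+\varepsilon)$ with $\varepsilon>0$ rather than a limiting supporting line. The infimum may then be approached only as $\lambda\to\pm\infty$, which is allowed since we only need $\inf$, not $\min$.
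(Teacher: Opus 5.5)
Your argument is correct, and for the reverse inequality it takes a different route from the paper.

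\textbf{The paper's route.} The paper introduces the concave upper-boundary function $f(q)=\sup\{x\cdot u: x\in K,\ x\cdot\nu=q\}$ and proves $h_K(u+\alpha\nu)=\sup_q\{f(q)+\alpha q\}$. It then splits into cases.
\begin{itemize}
\item For $p\in(m,M)$ it takes a finite supporting slope $s$ of $f$ at $p$, so that $\lambda=-s$ actually attains the infimum.
\item At $p=M$ and $p=m$ no finite supporting slope need exist. There it shows $h_K(u+t\nu)-tM\to f(M)$ and $h_K(u-t\nu)+tm\to f(m)$ as $t\to\infty$, by a compactness argument on maximizers.
\end{itemize}

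\textbf{Your separation argument.} It lives in the same planar picture, since the upper boundary of $S$ is the graph of $f$. However, you strictly separate the lifted point $(p,c+\varepsilon)$ instead of supporting at $(p,c)$. This gives a non-vertical separator for every $p\in[m,M]$ at once, so the endpoints need no separate limiting argument. What you give up is the explicit minimizer in the interior case. In the paper that minimizer foreshadows the identity $\partial_p h_\nu=-\lambda$ of Proposition~\ref{prop:lambda-derivative}.

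\textbf{Your Sion route.} It is shorter still, and it reverses the paper's logic. You obtain the minimax identity of Proposition~\ref{minimax2} first and read off \eqref{infhk} from it. The paper instead derives Proposition~\ref{minimax2} from Proposition~\ref{basic}. The price of your route is reliance on a nontrivial external theorem.

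\textbf{One tightening.} To get $\beta>0$, insert the point $(p,c)\in S$ into the strict separation inequality, which yields $\beta\varepsilon>0$. This rules out vertical separators and fixes the sign in one step. So the ``delicate point'' you flag in your last paragraph is already handled uniformly by the $\varepsilon$-shift.
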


\begin{proof}
We first prove
\begin{align}\label{ineq:first}
h_\nu(u,p)
\le
\inf_{\lambda\in\mathbb{R}}
\bigl\{h_K(u+\lambda\nu)-\lambda p\bigr\}.
\end{align}
If \(C(\nu,p)=\emptyset \), then \(h_\nu(u,p)=-\infty\), and the inequality is immediate. Suppose that \(C(\nu,p)\neq\emptyset\). For any $x\in C(\nu,p)$ and any $\lambda \in \mathbb{R}$, we have
\begin{align}\label{leq1}
x\cdot u
&= x\cdot u + \lambda(x\cdot \nu - p) \nonumber \\
&= x\cdot (u+\lambda \nu) - \lambda p.
\end{align}
Since $x\in K$, it follows that
\begin{align}\label{leq2}
x\cdot (u+\lambda \nu) \le \sup_{y\in K} y\cdot (u+\lambda \nu) = h_K(u+\lambda \nu).
\end{align}
Therefore, \eqref{leq1} and \eqref{leq2} imply
$x\cdot u \le h_K(u+\lambda \nu) - \lambda p$. Taking the supremum over all $x\in C(\nu,p)$ gives \[ h_\nu(u,p) \le h_K(u+\lambda \nu)-\lambda p. \] Since this holds for every $\lambda\in\mathbb{R}$, we obtain \[ h_\nu(u,p) \le \inf_{\lambda\in\mathbb{R}} \bigl(h_K(u+\lambda \nu)-\lambda p\bigr), \] which proves \eqref{ineq:first}.

% For every \(x\in C(\nu,p)\) and every \(\lambda\in\mathbb{R}\), we have
% \[
% x\cdot u
% =
% x\cdot u+\lambda(x\cdot\nu-p)
% =
% x\cdot(u+\lambda\nu)-\lambda p.
% \]
% Since $x\in K$, $x\cdot(u+\lambda\nu)
% \le h_K(u+\lambda\nu)$.
% Therefore,
% $x\cdot u
% \le h_K(u+\lambda\nu)-\lambda p$.
% Taking the supremum over \(x\in C(\nu,p)\), and then the infimum over
% \(\lambda\in\mathbb{R}\), gives
% \[
% h_\nu(u,p)
% \le
% \inf_{\lambda\in\mathbb{R}}
% \bigl\{h_K(u+\lambda\nu)-\lambda p\bigr\}.
% \]

We now prove the reverse inequality. Set
\[
m:=\min_{x\in K}x\cdot\nu=-h_K(-\nu),
\qquad
M:=\max_{x\in K}x\cdot\nu=h_K(\nu),
\]
and define
\[
f(q):=
\sup\{x\cdot u:x\in K,\ x\cdot\nu=q\},
\qquad q\in\mathbb{R},
\]
with the convention \(f(q)=-\infty\) if the corresponding slice is
empty. Since the image of \(K\) under the map \(x\mapsto x\cdot\nu\)
is the interval \([m,M]\), we have
\[
f(q)=h_\nu(u,q)\in\mathbb{R}
\quad\text{for }q\in[m,M],
\]
and \(f(q)=-\infty\) for \(q\notin[m,M]\).

Now we claim that \( f \) is concave on \([m, M]\) due to the convexity of \(K\). Indeed, let \(q_1, q_2 \in [m, M]\) and \(t \in [0,1]\). By the definition of \(f\), for any \(\varepsilon > 0\), we can choose points \(x_i \in K\) such that $
x_i \cdot \nu = q_i$ and $x_i \cdot u \ge f(q_i) - \varepsilon,  (i = 1,2).$
By the convexity of $K$, the point $x_t := t x_1 + (1 - t)x_2\in K$; by linearity, we have
\begin{align*}
x_t \cdot \nu &= t q_1 + (1 - t) q_2 \in [m, M], \\
x_t \cdot u &= t(x_1 \cdot u) + (1 - t)(x_2 \cdot u).
\end{align*}
Thus, \(x_t\) is admissible for \(f(t q_1 + (1 - t) q_2)\), and hence
\[
f(t q_1 + (1 - t) q_2) \ge x_t \cdot u
\ge t f(q_1) + (1 - t) f(q_2) - \varepsilon.
\]
Letting \(\varepsilon \downarrow 0\), we obtain
\[
f(t q_1 + (1 - t) q_2) \ge t f(q_1) + (1 - t) f(q_2),
\]
which shows that \(f\) is concave on \([m, M]\).

We shall also use the identity: for all $\alpha\in \mathbb{R}$,
\begin{align}\label{eq:fiber-envelope}
h_K(u+\alpha \nu)
&=\sup_{x\in K} x\cdot (u+\alpha \nu) =\sup_{x\in K} \bigl(x\cdot u+\alpha\, x\cdot \nu\bigr) \\
&=\sup_{q\in \mathbb{R}} \sup_{\substack{x\in K\\ x\cdot \nu=q}}
\bigl(x\cdot u+\alpha q\bigr)=\sup_{q\in \mathbb{R}} \bigl(f(q)+\alpha q\bigr).\nonumber
\end{align}

% We shall also use the identity
% \begin{equation}\label{eq:fiber-envelope}
% h_K(u+\alpha\nu)
% =
% \sup_{q\in[m,M]}
% \bigl\{f(q)+\alpha q\bigr\},
% \qquad \alpha\in\mathbb{R}.
% \end{equation}
% Indeed,
% \begin{align*}
% h_K(u+\alpha\nu)
% &=
% \sup_{x\in K}
% \bigl\{x\cdot u+\alpha x\cdot\nu\bigr\} \\
% &=
% \sup_{q\in[m,M]}
% \sup_{\substack{x\in K\\x\cdot\nu=q}}
% \bigl\{x\cdot u+\alpha q\bigr\} \\
% &=
% \sup_{q\in[m,M]}
% \bigl\{f(q)+\alpha q\bigr\}.
% \end{align*}

Now we consider four cases.
\emph{Case 1: \(p\in(m,M)\).}
Since \(f\) is finite and concave on \([m,M]\), it admits a finite
supporting slope \(s\in\mathbb{R}\) at \(p\); that is,
\[
f(q)\le f(p)+s(q-p)
\qquad\text{for every }q\in[m,M].
\]
For completeness, one may choose \(s\) satisfying
\[
\sup_{q\in(p,M]}
\frac{f(q)-f(p)}{q-p}
\le s\le
\inf_{q\in[m,p)}
\frac{f(q)-f(p)}{q-p}.
\]
The two quantities are finite, and the displayed inequality follows
from the concavity of \(f\).

Using \eqref{eq:fiber-envelope} with \(\alpha=-s\), we obtain
\begin{align*}
h_K(u-s\nu)+sp
&=
\sup_{q\in[m,M]}
\bigl\{f(q)-sq+sp\bigr\} \\
&=
\sup_{q\in[m,M]}
\bigl\{f(q)-s(q-p)\bigr\} \\
&\le f(p).
\end{align*}
Therefore,
\[
\inf_{\lambda\in\mathbb{R}}
\bigl\{h_K(u+\lambda\nu)-\lambda p\bigr\}
\le f(p)
=h_\nu(u,p).
\]

\medskip
\noindent
\emph{Case 2: \(p=M\).}
For \(t>0\), define
\[
G_M(t):=h_K(u+t\nu)-tM.
\]
By \eqref{eq:fiber-envelope},
\[
G_M(t)
=
\max_{x\in K}
\bigl\{x\cdot u-t(M-x\cdot\nu)\bigr\}.
\]
Since \(M-x\cdot\nu\ge0\) for every \(x\in K\), the function \(G_M\)
is nonincreasing. Moreover, by evaluating the maximum on the top
slice \(C(\nu,M)\), we obtain
$G_M(t)\ge f(M)$. Hence the finite limit
\[
L_M:=\lim_{t\to\infty}G_M(t)
\]
exists and satisfies \(L_M\ge f(M)\).

For each \(t>0\), choose \(x_t\in K\) such that
\[
G_M(t)
=
x_t\cdot u-t(M-x_t\cdot\nu).
\]
Let
\[
C_u:=\max_{x\in K}x\cdot u.
\]
Since \(G_M(t)\ge f(M)\), we have
\[
f(M)
\le x_t\cdot u-t(M-x_t\cdot\nu)
\le C_u-t(M-x_t\cdot\nu).
\]
It follows that
\[
0\le M-x_t\cdot\nu
\le\frac{C_u-f(M)}{t},
\]
and hence $x_t\cdot\nu\longrightarrow M$ as $t\to\infty$. Choose a sequence \(t_j\to\infty\). By the compactness of \(K\), after
passing to a subsequence, we may assume that
$x_{t_j}\longrightarrow x_\infty\in K$.
Then \(x_\infty\cdot\nu=M\), so \(x_\infty\in C(\nu,M)\). Furthermore,
\[
G_M(t_j)
=
x_{t_j}\cdot u-t_j(M-x_{t_j}\cdot\nu)
\le x_{t_j}\cdot u.
\]
Passing to the limit gives
\[
L_M\le x_\infty\cdot u\le f(M).
\]
Thus \(L_M=f(M)\), and therefore
\[
\inf_{\lambda\in\mathbb{R}}
\bigl\{h_K(u+\lambda\nu)-\lambda M\bigr\}
\le
\lim_{t\to\infty}
\bigl\{h_K(u+t\nu)-tM\bigr\}
=f(M).
\]

\medskip
\noindent
\emph{Case 3: \(p=m\).}
For \(t>0\), define
\[
G_m(t):=h_K(u-t\nu)+tm.
\]
As above,
\[
G_m(t)
=
\max_{x\in K}
\bigl\{x\cdot u-t(x\cdot\nu-m)\bigr\}.
\]
Since \(x\cdot\nu-m\ge0\) for every \(x\in K\), the function \(G_m\)
is nonincreasing and satisfies
$G_m(t)\ge f(m)$.
For each \(t>0\), choose \(y_t\in K\) attaining this maximum. Then
\[
f(m)
\le y_t\cdot u-t(y_t\cdot\nu-m)
\le C_u-t(y_t\cdot\nu-m),
\]
and hence
\[
0\le y_t\cdot\nu-m
\le\frac{C_u-f(m)}{t}.
\]
Thus $y_t\cdot\nu\to m$ as $t\rightarrow\infty$. Using the compactness of \(K\) and arguing
as in Case 2, we obtain
$\lim_{t\to\infty}G_m(t)=f(m)$.
Consequently,
\[
\inf_{\lambda\in\mathbb{R}}
\bigl\{h_K(u+\lambda\nu)-\lambda m\bigr\}
\le
\lim_{t\to\infty}
\bigl\{h_K(u-t\nu)+tm\bigr\}
=f(m).
\]

\medskip
\noindent
\emph{Case 4: \(p\notin[m,M]\).}
If \(p>M\), then for \(t>0\), the subadditivity and positive
homogeneity of \(h_K\) give
\begin{align*}
h_K(u+t\nu)-tp
&\le h_K(u)+t h_K(\nu)-tp \\
&=h_K(u)-t(p-M)
\longrightarrow-\infty
\end{align*}
as \(t\to\infty\). Therefore,
\[
\inf_{\lambda\in\mathbb{R}}
\bigl\{h_K(u+\lambda\nu)-\lambda p\bigr\}
=-\infty.
\]

If \(p<m\), then
\begin{align*}
h_K(u-t\nu)+tp
&\le h_K(u)+t h_K(-\nu)+tp \\
&=h_K(u)-t(m-p)
\longrightarrow-\infty
\end{align*}
as \(t\to\infty\). Thus the infimum is again \(-\infty\).
Since \(C(\nu,p)=\varnothing\) in either case, this agrees with
\(h_\nu(u,p)=-\infty\).

Combining the four cases gives
\[
\inf_{\lambda\in\mathbb{R}}
\bigl\{h_K(u+\lambda\nu)-\lambda p\bigr\}
\le h_\nu(u,p).
\]
Together with the first inequality, this proves
\eqref{infhk}.
\end{proof}

Next, we show that the slicing support function admits the following minimax representation.

\begin{proposition}\label{minimax2}
Let $K\subset \mathbb{R}^n$ be a convex body. For any $p\in \mathbb{R}$ and any $u\in \nu^\perp$, define the function
\[
L(x,\lambda)
:=
x \cdot u + \lambda (x \cdot \nu - p),
\qquad
x \in K, \; \lambda \in \mathbb{R}.
\]
Then the slicing support function defined by \eqref{def1} can be expressed as
\begin{equation}\label{infsup}
h_\nu(u,p)
=
\sup_{x \in K}
\inf_{\lambda \in \mathbb{R}} L(x,\lambda)
=
\inf_{\lambda \in \mathbb{R}}
\sup_{x \in K} L(x,\lambda).
\end{equation}
\end{proposition}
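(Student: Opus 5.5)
The plan is to compute each side of \eqref{infsup} directly and reduce one of them to Proposition~\ref{basic}; no general minimax theorem (Sion, etc.) is needed.

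\emph{Sup–inf side.} Fix $x\in K$ and look at the inner infimum over $\lambda$. The map $\lambda\mapsto L(x,\lambda)$ is affine with slope $x\cdot\nu-p$.
\begin{itemize}
\item If $x\cdot\nu=p$, it is constant, equal to $x\cdot u$.
\item If $x\cdot\nu\neq p$, the infimum is $-\infty$: let $\lambda\to-\infty$ or $\lambda\to+\infty$ according to the sign of the slope.
\end{itemize}
Hence $\inf_{\lambda}L(x,\lambda)$ equals $x\cdot u$ when $x\in C(\nu,p)$ and $-\infty$ otherwise. Taking the supremum over $x\in K$ gives $\sup_{x\in C(\nu,p)}x\cdot u$, which is $h_\nu(u,p)$ by \eqref{def1}.

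The empty slice needs a check. If $C(\nu,p)=\emptyset$, every $x\in K$ gives $-\infty$, so the supremum is $-\infty$. This agrees with the convention $h_\nu(u,p)=-\infty$. Since $K\neq\emptyset$, there is no degenerate supremum over an empty index set.

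\emph{Inf–sup side.} Fix $\lambda$. Rewriting $L(x,\lambda)=x\cdot(u+\lambda\nu)-\lambda p$ and taking the supremum over $x\in K$ gives
\[
\sup_{x\in K}L(x,\lambda)=h_K(u+\lambda\nu)-\lambda p,
\]
by the definition of $h_K$. Taking the infimum over $\lambda\in\mathbb{R}$ and applying Proposition~\ref{basic}, i.e.\ \eqref{infhk}, this equals $h_\nu(u,p)$. Combining the two sides proves \eqref{infsup}.

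The main obstacle, the reverse inequality $\inf\sup\le\sup\inf$, has in effect already been handled in the proof of Proposition~\ref{basic}. That proof covered the boundary heights $p=m,M$ (where the infimum need not be attained) and the case $p\notin[m,M]$. So the only remaining care is the extended-real bookkeeping when the slice is empty. Note that the weak inequality $\sup\inf\le\inf\sup$ holds trivially, so only that consistency check is needed.
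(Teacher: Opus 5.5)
Your proposal is correct and follows the paper's proof essentially step by step. You evaluate the inner infimum over $\lambda$ as $x\cdot u$ on the slice and $-\infty$ off it, which recovers $h_\nu(u,p)$ from \eqref{def1}. You then rewrite $\sup_{x\in K}L(x,\lambda)=h_K(u+\lambda\nu)-\lambda p$ and invoke Proposition~\ref{basic}. Your explicit check of the empty-slice case ($h_\nu=-\infty$) is a small bookkeeping point that the paper leaves implicit.
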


\begin{proof}
First, fix $x\in K$ and consider
$$
\inf_{\lambda\in\mathbb{R}} L(x,\lambda)
=
\inf_{\lambda\in \mathbb{R}}
\bigl(x\cdot u + \lambda (x\cdot \nu - p)\bigr).
$$
If $x\cdot \nu \neq p$, then the term $\lambda(x\cdot \nu - p)$
is unbounded below as $\lambda \to \pm\infty$, depending on the sign of $x\cdot \nu - p$.
Hence
\[
\inf_{\lambda\in \mathbb{R}} L(x,\lambda) = -\infty,
\qquad
\text{whenever } x\cdot \nu \neq p;
\]
if instead $x$ lies in the slicing plane $C(\nu,p)$, then the $\lambda$–term vanishes and
\[
\inf_{\lambda} L(x,\lambda)
=
x\cdot u.
\]
Combining the two cases, we obtain
\begin{align*}
\sup_{x\in K}\inf_{\lambda} L(x,\lambda)
&=\sup_{x\in K}\Bigl(\{x\cdot u : x\in C(\nu,p)\}\cup\{-\infty\}\Bigr)\\
&=
\sup_{x\in C(\nu,p)} x\cdot u
=
h_\nu(u,p).
\end{align*}

On the other hand, using \eqref{infhk} and a straightforward computation, we obtain
\begin{align*}
\inf_{\lambda} \sup_{x\in K} L(x,\lambda)
&=
\inf_{\lambda}
\left(
\sup_{x\in K} x\cdot (u+\lambda \nu) - \lambda p
\right) \\
&=
\inf_{\lambda}
\bigl(
h_K(u+\lambda \nu) - \lambda p
\bigr) \\
&=
h_\nu(u,p),
\end{align*}
and the result follows.
\end{proof}

\section{Recovering the Support Function from Slicing Support Functions}\label{sec3}
We will show that the support function $h_K$ of $K$ can be recovered from the slicing support function $h_\nu$, both without and with regularity assumptions on $\partial K$. We first recover $h_K$ from $h_\nu$ without any regularity assumption on $\partial K$. Since the proof relies on biconjugation, we begin with the following auxiliary lemma.

\begin{lemma}\label{lem:fu-properties}
Let $K\subset \mathbb{R}^n$ be a convex body. For every fixed $u\in \nu^\perp$, the function
\[
H_u(\lambda):=h_K(u+\lambda \nu), \qquad \lambda\in\mathbb{R},
\]
is proper, convex, and lower semicontinuous on $\mathbb{R}$.
\end{lemma}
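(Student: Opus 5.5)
The plan is to reduce all three properties to standard facts about $h_K$, using that $H_u$ is the composition of $h_K$ with the affine map $\lambda\mapsto u+\lambda\nu$. Since $K$ is a convex body, it is nonempty and compact. Hence for every $v\in\mathbb{R}^n$ the supremum $h_K(v)=\sup_{x\in K}x\cdot v$ is attained and finite, with
\[
|h_K(v)|\le R|v|, \qquad R:=\max_{x\in K}|x|.
\]
It follows that $H_u(\lambda)\in\mathbb{R}$ for every $\lambda$. So $H_u$ never takes the value $-\infty$ and is not identically $+\infty$, i.e. it is proper, with effective domain all of $\mathbb{R}$.

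For convexity, I will verify the inequality directly. Take $\lambda_1,\lambda_2\in\mathbb{R}$ and $t\in[0,1]$. Then
\[
u+(t\lambda_1+(1-t)\lambda_2)\nu=t(u+\lambda_1\nu)+(1-t)(u+\lambda_2\nu).
\]
For each $x\in K$, the quantity $x\cdot(\,\cdot\,)$ is linear, so $x\cdot(\text{that vector})\le tH_u(\lambda_1)+(1-t)H_u(\lambda_2)$. Taking the supremum over $x\in K$ gives convexity. Equivalently, this uses sublinearity of $h_K$, since a supremum of linear functions is convex and composition with an affine map preserves convexity.

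For lower semicontinuity, I will in fact prove Lipschitz continuity, which is stronger. For $v,w\in\mathbb{R}^n$ and $x\in K$ we have $x\cdot v\le x\cdot w+R|v-w|$, so $h_K(v)\le h_K(w)+R|v-w|$, and by symmetry $|h_K(v)-h_K(w)|\le R|v-w|$. Applied to $v=u+\lambda\nu$ and $w=u+\lambda'\nu$ with $|\nu|=1$, this yields $|H_u(\lambda)-H_u(\lambda')|\le R|\lambda-\lambda'|$. Hence $H_u$ is continuous, and in particular lower semicontinuous. Alternatively, $H_u$ is a pointwise supremum of the affine (hence continuous) functions $\lambda\mapsto x\cdot u+\lambda\,x\cdot\nu$, and such a supremum is lower semicontinuous.

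There is no real obstacle in this argument. The only point needing care is properness, i.e. finiteness everywhere, and that relies on $K$ being nonempty and bounded. This is exactly where the convex-body hypothesis enters, and it matters for the later application of Fenchel--Moreau.
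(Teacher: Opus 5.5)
Your proposal is correct and follows essentially the same route as the paper: properness from the finiteness of $h_K$ on a compact body, convexity from $h_K$ being a supremum of linear functions composed with the affine map $\lambda\mapsto u+\lambda\nu$, and lower semicontinuity from the continuity of $h_K$. The only difference is that you justify that continuity with the explicit Lipschitz bound $|h_K(v)-h_K(w)|\le R|v-w|$, whereas the paper simply asserts it.
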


\begin{proof}
First, since $K$ is compact, $h_K(v)=\sup_{x\in K} x\cdot v$ is finite for every $v\in\mathbb{R}^n$. Hence $H_u(\lambda)\in\mathbb{R}$ for all $\lambda$, so $H_u$ is proper.

Secondly, the support function $h_K$ is convex on $\mathbb{R}^n$ since it is a supremum of linear functions, $h_K(v)=\sup_{x\in K} x\cdot v.$
Fix $\lambda_0,\lambda_1\in\mathbb{R}$ and $t\in[0,1]$. Since
\[
u+\bigl(t\lambda_0+(1-t)\lambda_1\bigr)\nu
=
t(u+\lambda_0 \nu)+(1-t)(u+\lambda_1 \nu)
\]
and $h_K$ is convex, one gets
\begin{align*}
H_u\bigl(t\lambda_0+(1-t)\lambda_1\bigr)
&=
h_K\!\left(t(u+\lambda_0 \nu)+(1-t)(u+\lambda_1 \nu)\right)\\
&\le
t\,h_K(u+\lambda_0 \nu)+(1-t)\,h_K(u+\lambda_1 \nu) \\
&=
t H_u(\lambda_0)+(1-t)H_u(\lambda_1).
\end{align*}
Thus, $H_u$ is convex on $\mathbb{R}$.

Finally, since $K$ is compact, $h_K$ is finite and continuous on $\mathbb{R}^n$. Therefore the composition $H_u(\lambda)=h_K(u+\lambda \nu)$ is continuous on $\mathbb{R}$, hence lower semicontinuous.
\end{proof}

For our purposes, let us first recall the Fenchel--Moreau theorem.
For further details, we refer to \cite{koshi1983,zalinescu2002} and \cite[Section 8]{simons2008hahn}. Suppose that $X$ is a real Hausdorff locally convex topological vector space,
and let $X^{*}$ denote its dual space. We write
$\langle \cdot,\cdot \rangle : X^{*} \times X \to \mathbb{R}$
for the canonical dual pairing, defined by
\[
\langle x^{*},x\rangle := x^{*}(x),
\qquad x^{*}\in X^{*}, \; x\in X.
\]
Let $f:X\to (-\infty,+\infty]$ be an extended real-valued function.
The \emph{convex conjugate} (or the Legendre--Fenchel transform)
of $f$ is the function $f^{*}:X^{*}\to (-\infty,+\infty]$ defined by
\[
f^{*}(x^{*})
:=
\sup_{x\in X}
\bigl\{
\langle x^{*},x\rangle - f(x)
\bigr\},
\qquad x^{*}\in X^{*}.
\]
The \emph{biconjugate} of $f$ is the function
$f^{**}:X\to (-\infty,+\infty]$ defined by
\[
f^{**}(x)
:=
\sup_{x^{*}\in X^{*}}
\bigl\{
\langle x^{*},x\rangle - f^{*}(x^{*})
\bigr\},
\qquad x\in X.
\]
Equivalently, $f^{**}$ is the convex conjugate of $f^*$, regarded again as a function on $X$. The Fenchel--Moreau theorem asserts that if $f$ is proper, convex, and lower
semicontinuous, then
\[
f^{**}=f.
\]

We will apply the Fenchel-Moreau theorem to recover the support function
from the slicing support function by taking
$X=\mathbb{R}=X^{*}$.
Under this identification, the dual pairing $\langle p,\lambda\rangle = p\lambda$ is simply the usual multiplication.

\begin{theorem}[Recovery of $h_K$ from $h_\nu$]\label{thm:recovery}
Let $K\subset\mathbb{R}^n$ be a convex body, fix $\nu \in \mathbb{S}^{n-1}$, and define the slicing support function $h_\nu(u,p)$ as above. Then for every $u\in \nu^\perp$ and $\lambda\in\mathbb{R}$,
\begin{equation}\label{eq:recovery}
h_K(u+\lambda \nu)=\sup_{p\in\mathbb{R}}\bigl(h_\nu(u,p)+\lambda p\bigr).
\end{equation}
Consequently, knowing $h_\nu(u,p)$ for all $(u,p)\in \nu^\perp\times\mathbb{R}$ determines $h_K(v)$ for all $v\in\mathbb{R}^n$. More precisely, for all $v\in \mathbb{R}^n$, we have
\begin{align}\label{recover2}
h_K(v)=\sup_{p\in\R}\Big(h_\nu\big(v-(v\cdot \nu)\ \nu, p\big)+(v\cdot \nu)\,p\Big).
\end{align}
\end{theorem}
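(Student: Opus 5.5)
The plan is to read Proposition~\ref{basic} as a conjugacy statement and then invoke the Fenchel--Moreau theorem. Fix $u\in\nu^\perp$ and let $H_u(\lambda)=h_K(u+\lambda\nu)$, as in Lemma~\ref{lem:fu-properties}. The convex conjugate of $H_u$ on $X=X^*=\mathbb{R}$ is
\[
H_u^*(p)=\sup_{\lambda\in\mathbb{R}}\bigl\{p\lambda-H_u(\lambda)\bigr\}
=-\inf_{\lambda\in\mathbb{R}}\bigl\{h_K(u+\lambda\nu)-\lambda p\bigr\}.
\]
By \eqref{infhk}, this gives
\[
H_u^*(p)=-h_\nu(u,p)\qquad\text{for every }p\in\mathbb{R}.
\]
The identity also holds with the extended-value convention: for $p\notin[m,M]$ we have $h_\nu(u,p)=-\infty$, so $H_u^*(p)=+\infty$, which is consistent with $H_u^*$ taking values in $(-\infty,+\infty]$.

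Next compute the biconjugate. By definition,
\[
H_u^{**}(\lambda)=\sup_{p\in\mathbb{R}}\bigl\{\lambda p-H_u^*(p)\bigr\}
=\sup_{p\in\mathbb{R}}\bigl\{h_\nu(u,p)+\lambda p\bigr\}.
\]
The values $p$ outside $[m,M]$ contribute $-\infty$ and can be ignored. Lemma~\ref{lem:fu-properties} says $H_u$ is proper, convex and lower semicontinuous, so the Fenchel--Moreau theorem gives $H_u^{**}=H_u$. This is exactly \eqref{eq:recovery}.

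For \eqref{recover2}, take an arbitrary $v\in\mathbb{R}^n$ and decompose it as $v=u+\lambda\nu$ with $\lambda=v\cdot\nu$ and $u=v-(v\cdot\nu)\nu$. Then $u\cdot\nu=0$, so $u\in\nu^\perp$, and substituting into \eqref{eq:recovery} yields \eqref{recover2}. It follows that the family $\{h_\nu(u,p)\}$ determines $h_K$ on all of $\mathbb{R}^n$.

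The only delicate point is the bookkeeping with extended values: checking that the conjugate is taken with the right sign convention and that the $-\infty$ values of $h_\nu$ are handled correctly. Both are routine once Proposition~\ref{basic} is available for every $p\in\mathbb{R}$, including $p\notin[m,M]$.

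As a sanity check, the inequality ``$\ge$'' in \eqref{eq:recovery} can also be seen directly, without duality. By \eqref{infhk}, for each $p$ we have $h_\nu(u,p)\le h_K(u+\lambda\nu)-\lambda p$, hence $h_\nu(u,p)+\lambda p\le h_K(u+\lambda\nu)$. The reverse inequality is immediate from the envelope identity \eqref{eq:fiber-envelope} with $\alpha=\lambda$, since there $f(q)=h_\nu(u,q)$. This gives a self-contained fallback proof if one prefers not to rely on Fenchel--Moreau.
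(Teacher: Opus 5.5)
Your proposal is correct and is essentially the paper's proof: you identify $h_\nu(u,\cdot)=-H_u^*$ via \eqref{infhk}, apply Fenchel--Moreau to the proper, convex, continuous function $H_u$ from Lemma~\ref{lem:fu-properties}, and then decompose $v=u+(v\cdot\nu)\nu$. Your fallback is also sound; in fact, since $f(q)=h_\nu(u,q)$ for every $q$ (with the $-\infty$ convention), the envelope identity \eqref{eq:fiber-envelope} is already \eqref{eq:recovery} by itself, so the formula holds without any duality argument.
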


\begin{proof}
Fix $u$ and consider $H_u$ defined as in Lemma \ref{lem:fu-properties}. Since $H_u$ is proper, convex, and lower semicontinuous, the Fenchel-Moreau theorem applies and yields
\begin{align}\label{temp1}
H_u(\lambda)=H_u^{**}(\lambda)=\sup_{p\in\mathbb{R}}\bigl(\lambda p-H_u^*(p)\bigr),\qquad \lambda\in\mathbb{R}.
\end{align}
Using \eqref{infhk}, we derive
\begin{align}\label{temp2}
    -H_u^*(p)&=-\sup_{\lambda\in\mathbb{R}}(-H_u(\lambda)+\lambda p )=\inf_{\lambda\in\mathbb{R}}(H_u(\lambda)-\lambda p)=h_\nu(u,p).
\end{align}
Substituting \eqref{temp2} into \eqref{temp1}, we obtain \eqref{eq:recovery}.

Finally, since every $v\in\mathbb{R}^n$ admits the unique decomposition
\[
v=u+\lambda\nu,\qquad u\in\nu^\perp,\quad \lambda=v\cdot\nu,
\]
formula \eqref{eq:recovery} recovers $h_K(v)$ for all $v$, and hence \eqref{recover2} follows.
\end{proof}

We have two immediate consequences from Theorem \ref{thm:recovery}.

\begin{corollary}[Order preservation and set inclusion]\label{thm:order}
Let $K,L$ be convex bodies in $\mathbb{R}^n$, and $h_{\nu,K}$ and $h_{\nu, L}$ denote the slicing support functions of $K$ and $L$, respectively, along the $\nu$-direction. If
\[
h_{\nu, K}(u,p)\leq h_{\nu, L}(u,p),
\]
for all $u\in \nu^\perp$ and any $p\in\R$,
then
\[
h_{K}(v)\le h_{L}(v)\qquad \forall\,v\in\R^n,
\]
and consequently $K\subset L$.
\end{corollary}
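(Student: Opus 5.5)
The plan is to use the recovery formula \eqref{recover2} of Theorem~\ref{thm:recovery}, which writes $h_K$ and $h_L$ as suprema over $p$ of expressions that depend monotonically on the slicing support functions. Fix $v\in\mathbb{R}^n$ and decompose it as $v=u+\lambda\nu$, where $u:=v-(v\cdot\nu)\nu\in\nu^\perp$ and $\lambda:=v\cdot\nu$. Applying \eqref{eq:recovery} to both bodies gives
\[
h_K(v)=\sup_{p\in\mathbb{R}}\bigl(h_{\nu,K}(u,p)+\lambda p\bigr),\qquad h_L(v)=\sup_{p\in\mathbb{R}}\bigl(h_{\nu,L}(u,p)+\lambda p\bigr).
\]
By hypothesis, $h_{\nu,K}(u,p)+\lambda p\le h_{\nu,L}(u,p)+\lambda p$ for every $p$. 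The inequality still makes sense in the extended reals when $h_{\nu,K}(u,p)=-\infty$, that is, for $p$ outside the height interval of $K$. Taking the supremum over $p$ then gives $h_K(v)\le h_L(v)$.

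For the inclusion $K\subset L$, I will use the representation
\[
L=\bigcap_{w\in\mathbb{S}^{n-1}}\{x:x\cdot w\le h_L(w)\},
\]
which holds because $L$ is closed and convex (a consequence of Hahn--Banach separation, as recalled in the introduction). Take any $x\in K$ and $w\in\mathbb{S}^{n-1}$. Then $x\cdot w\le h_K(w)\le h_L(w)$, so $x$ lies in every supporting halfspace of $L$, and hence $x\in L$.

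The main point needing care is bookkeeping with the value $-\infty$. The hypothesis must be read in $[-\infty,\infty)$, and the suprema in \eqref{eq:recovery} are finite because Theorem~\ref{thm:recovery} identifies them with the finite values $h_K(v)$ and $h_L(v)$. Beyond this, the argument is a direct monotonicity consequence of \eqref{recover2}.
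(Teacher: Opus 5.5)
Your proposal is correct and follows essentially the same route as the paper. Both compare the two suprema in the recovery formula \eqref{eq:recovery} under the decomposition $v=u+\lambda\nu$, then pass from $h_K\le h_L$ to $K\subset L$. The only difference is that you spell out this last step via the halfspace representation and track the $-\infty$ convention explicitly, where the paper simply cites the standard characterization.
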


\begin{proof}
Using the recovery formula \eqref{eq:recovery}, for any fixed $u,\lambda$, we have
\begin{align*}
h_{K}(u+\lambda \nu)&=\sup_p(h_{\nu, K}(u,p)+\lambda p)\\
&\le \sup_p(h_{\nu, L}(u,p)+\lambda p)\\
&=h_{L}(u+\lambda \nu),
\end{align*}
and then extend to all $v=u+\lambda \nu$ by decomposition. The characterization $K\subset L$ if and only if $h_K\leq h_L$ for convex bodies then yields the result.
\end{proof}

\begin{corollary}[Slicing support functions uniquely determine convex bodies]\label{cor:lossless}
If the convex bodies $K$ and $L$ in $\mathbb{R}^n$ have the same values of their slicing support functions in the fixed slicing direction $\nu$,
$$h_{\nu, K}(u,p)=h_{\nu, L}(u,p)$$
for all $u\in \nu^\perp$ and all $p\in\R$, then $h_K\equiv h_L$ on $\R^n$, hence $K=L$.
\end{corollary}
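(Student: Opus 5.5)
The plan is to obtain this as a direct consequence of Corollary~\ref{thm:order}, or, equivalently, of the recovery formula \eqref{eq:recovery} in Theorem~\ref{thm:recovery}. The hypothesis $h_{\nu,K}(u,p)=h_{\nu,L}(u,p)$ for all $u\in\nu^\perp$ and $p\in\R$ contains both inequalities
\[
h_{\nu,K}\le h_{\nu,L}\quad\text{and}\quad h_{\nu,L}\le h_{\nu,K}
\]
on $\nu^\perp\times\R$. We use the convention $h_\nu=-\infty$ on empty slices, and this causes no difficulty: the equality holds as extended real numbers, so in particular the height intervals $[m,M]$ of $K$ and $L$ in direction $\nu$ coincide.

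First, apply Corollary~\ref{thm:order} to the pair $(K,L)$. This gives $h_K\le h_L$ on $\R^n$ and $K\subset L$. Next, apply the same corollary with the roles of $K$ and $L$ swapped. This gives $h_L\le h_K$ and $L\subset K$. Combining the two, we get $h_K\equiv h_L$ on $\R^n$ and $K=L$.

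Alternatively, one can argue directly. Fix $v\in\R^n$ and decompose it as $v=u+\lambda\nu$ with $u=v-(v\cdot\nu)\nu$ and $\lambda=v\cdot\nu$. Formula \eqref{recover2} expresses $h_K(v)$ as
\[
\sup_{p\in\R}\bigl(h_{\nu,K}(u,p)+\lambda p\bigr).
\]
This quantity depends only on the slicing support function, so it equals the corresponding expression for $L$, which is $h_L(v)$. The final step is the standard fact that a convex body is determined by its support function, since $K=\bigcap_{w}\{x: x\cdot w\le h_K(w)\}$.

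There is no real obstacle. The only point to check is that the supremum in \eqref{eq:recovery} is taken over extended real values, so that equal values (including $-\infty$ outside $[m,M]$) yield equal suprema. That is immediate.
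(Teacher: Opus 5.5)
Your proposal is correct and, in its direct form, is exactly the paper's argument: the recovery formula \eqref{eq:recovery}, applied to each decomposition $v=u+\lambda\nu$, gives $h_K\equiv h_L$, and a convex body is determined by its support function. Your primary route, applying Corollary~\ref{thm:order} in both directions, is only a repackaging of this, since that corollary is itself proved from the same recovery formula.
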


\begin{proof}
By Theorem~\ref{thm:recovery}, equality of the slicing support function implies equality of the support functions on all vectors of the form $u+\lambda \nu$ with $u\in \nu ^\perp$, $\lambda\in\R$.
Every $v\in\R^n$ can be decomposed uniquely as $v=u+\lambda \nu$ with $u=v-(v\cdot \nu)\nu \in \nu^\perp$ and $\lambda=v\cdot \nu$, so $h_K(v)=h_L(v)$ for all $v$.
Finally, support functions determine compact convex sets uniquely and the result follows.
\end{proof}

Next, we will show the other application of Theorem \ref{thm:recovery}. Recall that the Minkowski sum of two subsets $K,L\subset \mathbb{R}^n$ is defined by
\[
K+L:=\{x+y:\ x\in K,\ y\in L\}.
\]
If $K$ and $L$ are convex bodies, then their support functions satisfy
$
h_{K+L}=h_K+h_L$.

Motivated by the partial infimal convolution for functions defined on a product
space
\cite[Theorem~4.2]{simons2005fenchel},
\cite[Definition~5.2]{bauschke2008monotone}, and \cite[Definition~16.3]{simons2008hahn},
we define the partial supremal convolution in the $p$-variable by
\[
\bigl(h_{\nu,K}\overline{\square}_p h_{\nu,L}\bigr)(u,p)
:=
\sup_{\substack{p_1+p_2=p}}
\Bigl\{h_{\nu,K}(u,p_1)+h_{\nu,L}(u,p_2)\Bigr\}.
\]
For each fixed $u$, this is the usual supremal convolution of
$h_{\nu,K}(u,\cdot)$ and $h_{\nu,L}(u,\cdot)$; equivalently, it is the sign-dual of the
partial infimal convolution in the second variable.

The following result describes the relation between the Minkowski sum and the supremal convolution for slicing support functions.
\begin{corollary}\label{thm:minkowski}
For convex bodies $K, L$ in $\mathbb{R}^n$,
\[
h_{\nu, K+L}
=
h_{\nu, K}\ \overline{\square}_p\ h_{\nu, L}
\]
\end{corollary}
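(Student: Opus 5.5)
The identity follows most directly from the definition of the slices. The recovery formula or conjugate duality could also be used, but the geometric route avoids limit issues at the endpoints, so I will argue geometrically and mention duality only as an alternative check.

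Fix $u\in\nu^\perp$ and $p\in\mathbb{R}$. The slice of $K+L$ at height $p$ decomposes as
\[
C_{K+L}(\nu,p)=\bigcup_{p_1+p_2=p}\bigl(C_K(\nu,p_1)+C_L(\nu,p_2)\bigr).
\]
For the inclusion $\supseteq$, take $x\in C_K(\nu,p_1)$ and $y\in C_L(\nu,p_2)$. Then $x+y\in K+L$ and $(x+y)\cdot\nu=p_1+p_2=p$. For the inclusion $\subseteq$, any $z\in K+L$ with $z\cdot\nu=p$ can be written $z=x+y$ with $x\in K$ and $y\in L$. Setting $p_1:=x\cdot\nu$ and $p_2:=y\cdot\nu$ gives $p_1+p_2=p$.

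Taking the supremum of $z\cdot u$ over this union, and using that the supremum over a Minkowski sum of two sets is the sum of the suprema, yields
\[
h_{\nu,K+L}(u,p)=\sup_{p_1+p_2=p}\bigl\{h_{\nu,K}(u,p_1)+h_{\nu,L}(u,p_2)\bigr\}.
\]

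The only delicate point is the convention $-\infty$ for empty slices. Pairs $(p_1,p_2)$ in which one of the slices is empty contribute $-\infty$ to the right-hand side and therefore do not affect the supremum; the sum is never of the form $\infty-\infty$, since slicing support functions never take the value $+\infty$. If $C_{K+L}(\nu,p)=\emptyset$, then every pair contributes $-\infty$, and both sides equal $-\infty$. The decomposition above shows that the nonempty terms on the right correspond exactly to points of the slice of $K+L$, so no further case analysis is needed.

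As a cross-check, one can use Proposition \ref{basic} together with Theorem \ref{thm:recovery}. With $H_u$ as in Lemma \ref{lem:fu-properties}, we have $h_{\nu,K}(u,\cdot)=-H_{u,K}^*$, and $H_{u,K+L}=H_{u,K}+H_{u,L}$ because $h_{K+L}=h_K+h_L$. The conjugate of a sum of finite convex functions is the exact infimal convolution of the conjugates, with the infimum attained. Negating turns this into the supremal convolution, which gives the claimed identity again.

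The main obstacle on this route would be attainment and exactness of that conjugate formula. It holds here because both functions are finite everywhere on $\mathbb{R}$, so the standard qualification condition applies. This is why I would present the geometric argument as the proof and keep the duality computation only as a check.
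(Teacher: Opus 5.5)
Your proof is correct, but it takes a different route from the paper.

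The paper argues entirely by duality. It sets $H_u^K(\lambda)=h_K(u+\lambda\nu)$ and notes $H_u^{K+L}=H_u^K+H_u^L$. It then uses the exact formula $(H_u^K+H_u^L)^*=(H_u^K)^*\,\square\,(H_u^L)^*$, valid for finite convex functions on $\mathbb{R}$, and negates via $h_{\nu,K}(u,\cdot)=-(H_u^K)^*$ from \eqref{temp2}. That is precisely your ``cross-check.''

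Your main argument works at the level of sets instead, through the slice decomposition $C_{K+L}(\nu,p)=\bigcup_{p_1+p_2=p}\bigl(C_K(\nu,p_1)+C_L(\nu,p_2)\bigr)$. Your treatment of the $-\infty$ convention is sound.

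\textbf{What your route buys.} It is elementary and self-contained. It does not rely on Proposition~\ref{basic}, whose endpoint cases $p=m,M$ required a separate limiting argument. It also does not need the qualification condition that makes the conjugate-of-sum formula exact. It never uses convexity of $K$ or $L$, so the identity holds verbatim for arbitrary compact sets. Compactness further shows directly that the supremum over $p_1+p_2=p$ is attained whenever the slice of $K+L$ is nonempty: split a maximizer $z^*=x+y$ and take $p_1=x\cdot\nu$, $p_2=y\cdot\nu$.

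\textbf{What the paper's route buys.} It presents the corollary as a formal consequence of the Legendre--Fenchel structure behind Theorem~\ref{thm:recovery}. This makes the analogy with $h_{K+L}=h_K+h_L$ explicit: a sum of functions on the $\lambda$-side corresponds to an infimal (here supremal) convolution on the $p$-side. The cost is dependence on the convex-analytic machinery already developed.

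One small correction to your motivation: the duality route does not itself face endpoint issues at this stage. Those were already absorbed into the proof of Proposition~\ref{basic}, so what your argument really avoids is the need for that proposition at all.
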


\begin{proof}
Fix $u$ and set $H_u^K(\lambda):=h_K(u+\lambda \nu)$ and similarly $H_u^L$. Then $H_u^{K+L}=H_u^K+H_u^L$.

Since \(H_u^K\) and \(H_u^L\) are finite continuous convex functions on \(\mathbb R\), the standard conjugation formula gives
\[
(H_u^K+H_u^L)^*
=
(H_u^K)^*\square_p (H_u^L)^*
\]
without taking the lower semicontinuous closure of the infimal convolution,
where $\square$ is \emph{infimal} convolution defined by
\[
(H_u^{K*}\ \square_p \ H_u^{L*})(p)=\inf_{p_1+p_2=p}\big(H_u^{K*}(p_1)+H_u^{L*}(p_2)\big).
\]

By \eqref{temp2}, $h_{\nu, K}=-H_u^{K*}$ and $h_{\nu, L}= - H_u^{L*}$, so
\begin{align*}
h_{\nu, K+L}(u,p)&=- (H_u^{K+L})^*(p)
= -\inf_{p_1+p_2=p}\big(H_u^{K*}(p_1)+H_u^{L*}(p_2)\big)\\ &=\sup_{p_1+p_2=p}\big(h_{\nu, K}(u,p_1)+h_{\nu, L}(u,p_2)\big).
\end{align*}
\end{proof}

The following result gives the other recovery formula under suitable regularity assumptions on the boundary $\partial K$. For this purpose, we need to impose a uniqueness assumption on the minimizer in \eqref{infhk}.

We first recall the minimizer condition. Fix $\nu \in {S}^{n-1}$, $u \in \nu^\perp$, and $p \in \mathbb{R}$. Define
\begin{align}\label{fup}
F_{u,p}(\lambda)
:=
h_K(u+\lambda \nu)-\lambda p,
\qquad \lambda \in \mathbb{R}.
\end{align}
A number $\lambda^* \in \mathbb{R}$ is called a \textit{minimizer} of $F_{u,p}$ if it attains the infimum, that is,
\[
F_{u,p}(\lambda^*) = \inf_{\lambda \in \mathbb{R}} F_{u,p}(\lambda).
\]

When $K$ is strictly convex, one expects uniqueness of the supporting point in a given direction, and correspondingly uniqueness of the minimizer in the relevant situation. Under these conditions, we will show that the ambient support function can be recovered from the slicing support function (see Theorem \ref{thm:recovery-strict} below).

To this end, we introduce a boundary condition for $K$. We say that $\partial K$ satisfies the \textit{$h$-boundary condition} if, for each $v \in \mathbb{S}^{n-1}$, there exists a unique point $x(v) \in \partial K$ such that $h_K(v) = x(v) \cdot v$, and the map $v \mapsto x(v)$ is a bijection from $\mathbb{S}^{n-1}$ onto $\partial K$. As an example, we point out that a strictly convex body $K$ with $C^1$-boundary satisfies the $h$-boundary condition. Another example is provided by Hadamard's theorem (see \cite{deLima2023} for instance). A compact, connected $C^2$-surface in $\mathbb{R}^3$ with positive Gaussian curvature bounds a strictly convex body, and its Gauss map is a diffeomorphism. Therefore it also satisfies the $h$-boundary condition.

\begin{remark}
Strict convexity of $K$ alone, without boundary $C^1$-regularity assumption, does not ensure that the $h$-boundary condition holds. For instance, in $\mathbb{R}^2$, consider the strictly convex body
\[
K=\{(x,y)\in \mathbb{R}^2 : |x|+y^2\le 1\},
\]
which has two non-smooth points at $(0,1)$ and $(0,-1)$. For any $a \in [-1,1]$, the line $L_a: ax+2y=2$ is a supporting line of $K$ at $(0,1)$. Indeed, for any $(x,y)\in K$, we have
$ax+2y \le |x|+2y \le |x|+y^2+1 \le 2$.
Moreover, the equality holds if and only if $(x,y)=(0,1)$, so $(0,1)$ is the unique contact point of each $L_a$. Let $v$ be a unit normal vector to $L_a$. Then $h_K(v)=x(v)\cdot v$ with $x(v)=(0,1)$. Consequently, many distinct directions $v$ correspond to the same support point, and the map $v \mapsto x(v)$ fails to be bijective. Therefore, $K$ does not satisfy the $h$-boundary condition.
\end{remark}

\begin{lemma}[Existence and uniqueness of the minimizer under $h$-boundary condition]\label{lem:unique-minimizer}
Let $K \subset \mathbb{R}^n$ be a convex body with the $h$-boundary condition. Fix $\nu \in \mathbb{S}^{n-1}$ and nonzero $u \in \nu^\perp$, and let $p \in (m,M)$, where $m$ and $M$ are defined in \eqref{height}. Then the function $F_{u,p}$ defined in \eqref{fup} admits a unique minimizer $\lambda^* = \lambda^*(u,\nu,p) \in \mathbb{R}$.
\end{lemma}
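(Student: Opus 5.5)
Existence and uniqueness will be handled separately. For existence, I use that $F_{u,p}$ is convex and continuous (Lemma~\ref{lem:fu-properties}) and show it is coercive when $p\in(m,M)$. By positive homogeneity,
\[
\lim_{\lambda\to+\infty}\frac{F_{u,p}(\lambda)}{\lambda}=\lim_{\lambda\to+\infty}h_K(u/\lambda+\nu)-p=h_K(\nu)-p=M-p>0,
\]
and similarly, as $\lambda\to-\infty$,
\[
\frac{F_{u,p}(\lambda)}{|\lambda|}\to h_K(-\nu)+p=p-m>0.
\]
So $F_{u,p}(\lambda)\to+\infty$ as $|\lambda|\to\infty$, and a continuous coercive function on $\mathbb{R}$ attains its minimum. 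The set of minimizers is then a nonempty compact interval $[\lambda_0,\lambda_1]$.

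For uniqueness, suppose for contradiction that $\lambda_0<\lambda_1$. Then $F_{u,p}$ is constant on $[\lambda_0,\lambda_1]$, so $H_u(\lambda)=h_K(u+\lambda\nu)$ is affine there with slope $p$. Fix an interior $\bar\lambda$ of this interval and set $v=u+\bar\lambda\nu\neq0$ (recall $u\neq0$). By the $h$-boundary condition and homogeneity, $h_K$ has a unique maximizer $x(v)$ in direction $v$. For convex bodies this means $\partial h_K(v)=\{x(v)\}$, since the subdifferential of $h_K$ at $v$ is the support set $F(K,v)$. Hence $h_K$ is differentiable at $v$ with $\nabla h_K(v)=x(v)$. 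Since $H_u$ is affine near $\bar\lambda$ with slope $p$, this gives $x(v)\cdot\nu=p$.

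The same argument applies at every $\lambda\in(\lambda_0,\lambda_1)$, so each $x(u+\lambda\nu)$ lies in the slice $C(\nu,p)$. Moreover, $H_u$ being affine means
\[
h_K(u+\lambda\nu)=h_K(v)+(\lambda-\bar\lambda)p
\]
on the interval. The point $x(v)$ satisfies
\[
x(v)\cdot(u+\lambda\nu)=h_K(v)+(\lambda-\bar\lambda)\,x(v)\cdot\nu=h_K(u+\lambda\nu),
\]
so $x(v)$ is also a maximizer for the direction $u+\lambda\nu$. By uniqueness, $x(u+\lambda\nu)=x(v)$ for all $\lambda$ in the interval. The directions $(u+\lambda\nu)/|u+\lambda\nu|$ are pairwise distinct unit vectors because $u\neq0$ and $u\perp\nu$. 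Thus distinct directions in $\mathbb{S}^{n-1}$ map to the same boundary point, contradicting the injectivity of $v\mapsto x(v)$. Hence $\lambda_0=\lambda_1$.

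The main point needing care is the identification $\partial h_K(v)=F(K,v)$, which I will quote as standard (Schneider), together with the fact that the affine piece forces the shared support point. The latter step can be done directly as above, without differentiability: take any maximizer $x$ for $v$. Then $x\cdot(u+\lambda\nu)\le h_K(u+\lambda\nu)$ for $\lambda$ on both sides of $\bar\lambda$, with equality at $\bar\lambda$, and affinity with slope $p$ forces $x\cdot\nu=p$ and equality throughout the interval. This avoids the differentiability step entirely.
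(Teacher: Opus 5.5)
Your proof is correct. The existence step matches the paper's: both get coercivity from $M-p>0$ and $p-m>0$. The paper writes this with the sublinear bound $h_K(u+\lambda\nu)\ge \lambda h_K(\nu)-h_K(-u)$, and you take the limit of $F_{u,p}(\lambda)/\lambda$, which is the same idea.

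For uniqueness you take a different route. The paper argues directly. It writes $x(\lambda)$ for the contact point of $v_\lambda=u+\lambda\nu$ and adds the strict support inequalities $x_1\cdot v_1>x_2\cdot v_1$ and $x_2\cdot v_2>x_1\cdot v_2$. These are strict because injectivity gives $x_1\neq x_2$. This shows that $\lambda\mapsto x(\lambda)\cdot\nu$ is strictly increasing, and then $F_{u,p}'(\lambda)=x(\lambda)\cdot\nu-p$ gives strict convexity of $F_{u,p}$ on all of $\mathbb{R}$.

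You argue by contradiction instead. A nondegenerate interval of minimizers makes $H_u$ affine with slope $p$. This forces one point of $K$ to support an entire arc of distinct directions, contradicting injectivity. Your elementary version in the last paragraph uses no differentiability and no appeal to $\partial h_K(v)=F(K,v)$. It only needs the facts that $g(\lambda)=h_K(u+\lambda\nu)-x\cdot(u+\lambda\nu)\ge 0$, $g(\bar\lambda)=0$, and $g$ is affine near $\bar\lambda$, so $g\equiv 0$.

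The paper's derivative step, by contrast, tacitly needs $h_K$ to be differentiable at every $v_\lambda$. That holds under the $h$-boundary condition precisely because of the subdifferential identity you mention, but the paper only speaks of points where $F_{u,p}$ is differentiable.

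What the paper's route buys is geometric monotonicity of the contact height $x(\lambda)\cdot\nu$. This makes the first-order equation $F_\lambda=p$ used in Theorem~\ref{cylinder} uniquely solvable for every $p$. Your argument gives the same strict convexity if you run it on any interval where $H_u$ is affine, with any slope. A convex function on $\mathbb{R}$ that is not strictly convex is affine on some nondegenerate interval.
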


\begin{remark}
The interior condition, $m<p<M$, is essential for existence of a finite minimizer. At the endpoint heights $p=m$ or $p=M$, the infimum may fail to be attained since the planar section $C(\nu, p)$ of $K$ degenerates to a single point.
\end{remark}

\begin{proof}
We split the proof into existence and uniqueness.

\noindent
\textit{Step 1: Existence.}
Since $p<M=h_K(\nu)$, choose $\varepsilon_+>0$ such that
\[
p\le h_K(\nu)-2\varepsilon_+.
\]
Moreover, the support function is sublinear and \(1\)-homogeneous,
\[
h_K(u+\lambda \nu)\ge h_K(\lambda \nu)-h_K(-u)=\lambda h_K(\nu)-h_K(-u)
\qquad (\lambda\ge 0).
\]
Hence for $\lambda\ge 0$,
\begin{align*}
F_{u, p}(\lambda)
&=
h_K(u+\lambda \nu)-\lambda p \\
&\ge
\lambda\bigl(h_K(\nu)-p\bigr)-h_K(-u) \\
&\ge
2\lambda \varepsilon_+ -h_K(-u)\xrightarrow[\lambda\to +\infty]{}+\infty.
\end{align*}
Similarly, since $p>m=-h_K(-\nu)$, choose $\varepsilon_->0$ such that
\[
p\ge -h_K(-\nu)+2\varepsilon_-.
\]
Write $\lambda=-t$ with $t\ge 0$.
Again by sublinearity,
\[
h_K(u-t \nu)\ge h_K(-t \nu)-h_K(-u)=t\,h_K(-\nu)-h_K(-u).
\]
Therefore,
\begin{align*}
F_{u, p}(-t)
&=
h_K(u-t\nu)+tp \\
&\ge
t\bigl(h_K(-\nu)+p\bigr)-h_K(-u) \\
&\ge
2 t\varepsilon_- -h_K(-u)\xrightarrow[t\to +\infty]{}+\infty.
\end{align*}
So $F_{u, p}(\lambda)\to+\infty$ as $|\lambda|\to\infty$. Since $K$ is convex, $h_K$ (and so $F_{u, p}$) is continuous, $F_{u,p}$ must
attain its minimum on $\mathbb{R}$.

\noindent
\textit{Step 2: Uniqueness.}
For each $\lambda\in\mathbb{R}$, let $v_\lambda:=u+\lambda \nu.$
Since $u\neq 0$ and $u\perp \nu$, we have $v_\lambda\neq 0$ for all $\lambda$. Since $K$ satisfies the $h$-boundary condition, each supporting direction $v_\lambda$ determines a unique contact point
$x(\lambda)$ on the boundary of $K$ such that
$h_K(v_\lambda)=x(\lambda)\cdot v_\lambda$.
We claim that the scalar function
\[
\lambda\longmapsto x(\lambda)\cdot \nu
\]
is strictly increasing. Let $\lambda_1<\lambda_2$, and write $x_i:=x(\lambda_i)$,
$v_i:=v_{\lambda_i}$ for $i=1,2$. Since $v_1\neq v_2$, the $h$-boundary assumption gives $x_1\neq x_2$.
Since $x_i$ is the unique contact point in direction $v_i$, we have the strict support inequalities
\[
x_1\cdot v_1=h_K(v_1) >x_2\cdot v_1,
\qquad
x_2\cdot v_2 =h_K(v_2)>x_1\cdot v_2.
\]
Adding them yields
\[
(x_1-x_2)\cdot(v_1-v_2)>0.
\]
But $v_1-v_2=(\lambda_1-\lambda_2)\nu$,
hence $(\lambda_1-\lambda_2)\bigl((x_1-x_2)\cdot \nu \bigr)>0$.
Since $\lambda_1-\lambda_2<0$ by assumption, it follows that $x_1\cdot \nu <x_2\cdot \nu$.
So $\lambda\mapsto x(\lambda)\cdot \nu$ is strictly increasing and this proves the claim.

Now let $\lambda$ be any point at which $F_{u, p}$ is differentiable. Since $h_K$ is differentiable at $v_\lambda$, the definition of \(F_{u,p}\) gives
\[
F'_{u, p}(\lambda)=\nabla h_K(v_\lambda)\cdot \nu-p=x(\lambda)\cdot \nu-p.
\]
Since $\lambda\mapsto x(\lambda)\cdot \nu$ is strictly increasing, the derivative
$F'_{u, p}(\lambda)$ is strictly increasing. Thus, $F_{u, p}$ is strictly convex on $\mathbb{R}$.

A strictly convex function can have at most one minimizer. Since existence was established in Step
1, the minimizer of $F_{u,p}$ is unique.
\end{proof}

\begin{proposition}[Derivative of the slicing support function with respect to the slice height]\label{prop:lambda-derivative}
Let $K\subset \mathbb{R}^n$ be a convex body, fix $\nu \in \mathbb{S}^{n-1}$, and let $C(\nu,p)$ denote the corresponding slice. For $u\in \nu^\perp$, define
\begin{equation}\label{eq:inf-rep-fp}
f(p):=h_\nu(u,p) =\inf_{\lambda\in\mathbb{R}}
\bigl(h_K(u+\lambda \nu)-\lambda p\bigr),
\end{equation}
whenever $C(\nu,p)\neq \varnothing$. Let $p_0\in \mathbb{R}$, and assume that the infimum in \eqref{eq:inf-rep-fp} is attained by a unique minimizer
$\lambda_0^*=\lambda_0^*(u,\nu,p_0)$ in a neighborhood of $p_0$, and that $f$ is differentiable at $p_0$. Then
\begin{equation}\label{eq:lambda-derivative}
f'(p_0)=
\frac{\partial}{\partial p}h_\nu(u,p_0)
=
-\lambda_0^*.
\end{equation}
\end{proposition}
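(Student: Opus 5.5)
This is an envelope (Danskin-type) argument. Since $f$ is an infimum of affine functions of $p$, we get a one-sided comparison from using the fixed minimizer at nearby heights. Differentiability of $f$ at $p_0$ then pins down the slope.

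Write $F(\lambda,p):=h_K(u+\lambda\nu)-\lambda p$, so that $f(p)=\inf_\lambda F(\lambda,p)$ by Proposition~\ref{basic}.

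\emph{Key step.} Let $\lambda_0^*$ be the minimizer at $p_0$, so $f(p_0)=F(\lambda_0^*,p_0)$. For every $p$ near $p_0$, the definition of the infimum gives
\[
f(p)\le F(\lambda_0^*,p)=h_K(u+\lambda_0^*\nu)-\lambda_0^*p
= f(p_0)-\lambda_0^*(p-p_0).
\]
Hence $f(p)-f(p_0)\le -\lambda_0^*(p-p_0)$ for all such $p$, i.e.\ the affine function $p\mapsto f(p_0)-\lambda_0^*(p-p_0)$ lies above $f$ and touches it at $p_0$. This is just the statement $-\lambda_0^*\in$ superdifferential of $f$ at $p_0$. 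It is consistent with the concavity of $f$ established in the proof of Proposition~\ref{basic}, but we do not need that concavity here.

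\emph{Taking limits.} Divide by $p-p_0$. For $p>p_0$ we get $\frac{f(p)-f(p_0)}{p-p_0}\le-\lambda_0^*$, and letting $p\downarrow p_0$ gives $f'(p_0)\le-\lambda_0^*$. For $p<p_0$ the inequality reverses, and letting $p\uparrow p_0$ gives $f'(p_0)\ge-\lambda_0^*$. Differentiability of $f$ at $p_0$ makes both one-sided limits equal to $f'(p_0)$, so $f'(p_0)=-\lambda_0^*$, which is \eqref{eq:lambda-derivative}.

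\emph{Where care is needed.} One must have a neighborhood of $p_0$ on which $f$ is finite, so that the difference quotients make sense. This is implicit in the hypothesis that $f$ is differentiable at $p_0$ (so $p_0\in(m,M)$). Given the hypotheses, the uniqueness of minimizers near $p_0$ is not actually used. It only identifies $\lambda_0^*$ unambiguously, and it would matter if one instead tried to prove differentiability from continuity of $p\mapsto\lambda^*(p)$. I will remark that the argument shows any minimizer $\lambda$ satisfies $f'(p_0)=-\lambda$, which forces uniqueness whenever $f$ is differentiable.
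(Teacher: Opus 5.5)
Your proof is correct, and it is shorter than the paper's. Both arguments start from the same inequality $f(p_0+\Delta p)-f(p_0)\le-\lambda_0^*\,\Delta p$, which comes from testing the infimum at a nearby height with the fixed minimizer $\lambda_0^*$. You then finish immediately: $f$ is assumed differentiable at $p_0$ and lies below an affine function touching it at $p_0$, so that affine function's slope must equal $f'(p_0)$.

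The paper instead builds a two-sided squeeze. It also derives $f(p_0+\Delta p)-f(p_0)\ge-\lambda^*_{\Delta p}\,\Delta p$ from minimizers at the perturbed heights, and then uses uniqueness to assert $\lambda^*_{\Delta p}\to\lambda_0^*$. That convergence step is only sketched there. It needs boundedness of the nearby minimizers, which is available from the coercivity estimate in Step 1 of Lemma \ref{lem:unique-minimizer}. Your version needs neither the nearby minimizers nor uniqueness. As you observe, it also shows that differentiability at $p_0$ forces all minimizers to coincide.

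What the squeeze buys, once the convergence of minimizers is justified, is a Danskin-type statement. Uniqueness of minimizers near an interior $p_0$ implies that $f$ is differentiable at $p_0$; continuity of $f$ there is automatic by concavity. This is what Theorem \ref{thm:recovery-strict} tacitly needs, since it does not assume differentiability in $p$.

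Your approach recovers this too, when combined with Case 1 of the proof of Proposition \ref{basic}. That case shows that every supporting slope $s$ of the concave function $f$ at $p_0\in(m,M)$ yields the minimizer $\lambda=-s$. Together with your inequality, the set of minimizers at $p_0$ is exactly $\{-s : f'_+(p_0)\le s\le f'_-(p_0)\}$. Hence uniqueness of the minimizer and differentiability of $f$ at $p_0$ are equivalent.
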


\begin{proof}
Set
\[
F_{u,p}(\lambda):=h_K(u+\lambda \nu)-\lambda p,
\qquad
f(p)=\inf_{\lambda\in\mathbb{R}}F_{u,p}(\lambda).
\]
Let $\lambda_0^*$ be the unique minimizer at the given value of $p_0$, so that
\[
f(p_0)=F_{u,p_0}(\lambda_0^*).
\]
Note that $\lambda$ depends on $p$, so we cannot differentiate $f$ with respect to $p$ directly to obtain the derivative of $f$ at $p_0$. We therefore first prove the bound for the difference quotient. For any $\Delta p\in\mathbb{R}$,
since $f(p_0+\Delta p)$ is the infimum of $F_{u,p_0+\Delta p}(\lambda)$ over $\lambda$, at $\lambda=\lambda_0^*$ we have
\[
f(p_0+\Delta p)
\le
F_{u, p_0+\Delta p}(\lambda_0^*)
=
h_K(u+\lambda_0^* \nu)-\lambda_0 ^*(p_0+\Delta p).
\]
Subtracting $f(p_0)=h_K(u+\lambda_0^* \nu)-\lambda_0^*p_0$, we obtain, for any $\Delta p\in \mathbb{R}$,
\[
f(p_0+\Delta p)-f(p_0)\le -\lambda_0^*\,\Delta p.
\]
Hence
\begin{equation}\label{eq:upper-dq}
\frac{f(p_0+\Delta p)-f(p_0)}{\Delta p}\le -\lambda_0^*
\qquad\text{if }\Delta p>0,
\end{equation}
while the inequality reverses after division if $\Delta p<0$.

Next, let $\lambda_{\Delta p}^*$ be a minimizer for $f(p_0+\Delta p)$; thus
\[
f(p_0+\Delta p)=F_{u, p_0+\Delta p}(\lambda_{\Delta p}^*)=h_K(u+\lambda^*_{\Delta p}\nu)-\lambda^*_{\Delta p}(p_0+\Delta p).
\]
Evaluating $f(p_0)$ at the same $\lambda_{\Delta p}^*$ gives
\[
f(p_0)=\inf_\lambda F_{u,p_0}(\lambda) \le F_{u,p_0}(\lambda_{\Delta p}^*)
=
h_K(u+\lambda_{\Delta p}^* \nu)-\lambda_{\Delta p}^* p_0.
\]
Subtracting from the identity for $f(p_0+\Delta p)$ yields
\[
f(p_0+\Delta p)-f(p_0)\ge -\lambda_{\Delta p}^* \  \Delta p.
\]
Therefore
\begin{equation}\label{eq:lower-dq}
\frac{f(p_0+\Delta p)-f(p_0)}{\Delta p}\ge -\lambda_{\Delta p}^*
\qquad\text{if }\Delta p>0,
\end{equation}
again with reversed inequality for $\Delta p<0$.

Now use the uniqueness of the minimizer $\lambda_0^*$ at $p_0$. Since $f$ is differentiable at $p_0$, it is in
particular continuous there, and the standard uniqueness argument for minimizers implies that any
sequence $\Delta p_j\to 0$ satisfies
\[
\lambda_{\Delta p_j}^*\to \lambda^*_0.
\]
Passing to the limit in \eqref{eq:upper-dq} and \eqref{eq:lower-dq} from the right and left of the inequalities, we get
$
f'(p_0)=-\lambda_0^*$. This proves \eqref{eq:lambda-derivative}.
\end{proof}

When \(K\) is a strictly convex body with \(C^1\)-boundary, Lemma~\ref{lem:unique-minimizer} ensures the existence and uniqueness of the minimizer \(\lambda\) in the infimal representation \eqref{infhk}, at least for interior slicing heights. Moreover, at every point where \(h_\nu(u,p)\) is differentiable with respect to \(p\), Proposition~\ref{prop:lambda-derivative} gives $\partial_p h_\nu(u,p)=-\lambda$.
Substituting this $\lambda$ into \eqref{infhk} yields a pointwise recovery formula for \(h_K\) in terms of the slicing support function \(h_\nu\), as stated below.

\begin{theorem}[Recovery formula for strictly convex bodies]\label{thm:recovery-strict}
Let $K\subset \mathbb{R}^n$ be a strictly convex body with $C^1$-boundary, let $\nu \in \mathbb{S}^{n-1}$, and $C(\nu,p)$ be the corresponding slice. Let $u\in \nu^\perp$ with $u\neq0$, and assume $m <p<M$, where $m$ and $M$ are defined in \eqref{height}. Then
\begin{equation}\label{eq:recovery-final}
h_K\bigl(u-\partial_p h_\nu(u,p)\,\nu\bigr)
=
h_\nu(u,p)-p\,\partial_p h_\nu(u,p).
\end{equation}
\end{theorem}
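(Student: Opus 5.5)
The plan is to combine Lemma~\ref{lem:unique-minimizer} with Proposition~\ref{prop:lambda-derivative}, and then substitute into the infimal representation \eqref{infhk}.

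\textbf{Step 1 (unique minimizer).} A strictly convex body with $C^1$-boundary satisfies the $h$-boundary condition, as noted before the Remark. By Lemma~\ref{lem:unique-minimizer}, for every $q\in(m,M)$ the function $F_{u,q}$ has a unique minimizer $\lambda^*(q)$. In particular this holds on a neighborhood of $p$, which is the hypothesis of Proposition~\ref{prop:lambda-derivative}.

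\textbf{Step 2 (differentiability of $f(q)=h_\nu(u,q)$ at $p$).} This is the main obstacle. Proposition~\ref{prop:lambda-derivative} assumes differentiability rather than proving it, so I will establish it directly. By \eqref{temp2}, $-f=H_u^*$ is convex, so $f$ is concave and finite on $[m,M]$. Its superdifferential at $p$ is therefore a nonempty interval $[f'_+(p),f'_-(p)]$.

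For any $s$ in this superdifferential, Case~1 of the proof of Proposition~\ref{basic} gives $h_K(u-s\nu)+sp\le f(p)$. Combined with \eqref{infhk}, this shows that $\lambda=-s$ is a minimizer of $F_{u,p}$. By uniqueness the superdifferential is the single point $\{-\lambda^*(p)\}$. A concave function with a singleton superdifferential at an interior point is differentiable there, so $f'(p)=-\lambda^*(p)$. This also re-derives \eqref{eq:lambda-derivative} without appealing to the continuity of the minimizer.

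An alternative route is to note that $H_u$ is strictly convex, as shown in Step~2 of the proof of Lemma~\ref{lem:unique-minimizer}, and that its conjugate is therefore differentiable on the interior of its domain. I prefer the direct argument above.

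\textbf{Step 3 (substitution).} Set $\lambda=\lambda^*(p)=-\partial_p h_\nu(u,p)$. Since $\lambda$ attains the infimum in \eqref{infhk},
\[
h_\nu(u,p)=h_K(u+\lambda\nu)-\lambda p=h_K\bigl(u-\partial_p h_\nu(u,p)\,\nu\bigr)+p\,\partial_p h_\nu(u,p).
\]
Rearranging gives \eqref{eq:recovery-final}. The only delicate point is the identification of superdifferential elements with minimizers in Step~2. Everything else follows from results already stated in the paper.
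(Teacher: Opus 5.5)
Your proof is correct and has the same skeleton as the paper's argument, which is the paragraph preceding the theorem. That argument takes the unique minimizer $\lambda^*$ from Lemma~\ref{lem:unique-minimizer}, identifies $\partial_p h_\nu(u,p)=-\lambda^*$ via Proposition~\ref{prop:lambda-derivative}, and substitutes into \eqref{infhk}.

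The genuine difference is your Step~2. The paper only asserts $\partial_p h_\nu=-\lambda^*$ ``at every point where $h_\nu(u,p)$ is differentiable with respect to $p$''. Proposition~\ref{prop:lambda-derivative} takes differentiability of $f$ as a hypothesis. It then squeezes the difference quotients between $-\lambda_0^*$ and $-\lambda^*_{\Delta p}$, using a convergence $\lambda^*_{\Delta p}\to\lambda_0^*$ that is only sketched.

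You argue instead that every supergradient $s$ of the concave function $f=h_\nu(u,\cdot)$ at $p$ makes $-s$ a minimizer of $F_{u,p}$. This follows from the Case~1 computation with \eqref{eq:fiber-envelope} together with \eqref{infhk}. Uniqueness of the minimizer then collapses the superdifferential to $\{-\lambda^*\}$, which gives differentiability and the value of the derivative at once.

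What your route buys is completeness. The theorem asserts \eqref{eq:recovery-final} for every $p\in(m,M)$ without assuming that $\partial_p h_\nu(u,p)$ exists, and you actually prove that it does. You also need uniqueness of the minimizer only at $p$ itself, not on a neighborhood, and no continuity of minimizers. The paper's route is a generic envelope-theorem computation that fits the implicit-differentiation style of Section~\ref{sec4}, but it leaves its differentiability hypothesis unverified.

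Your alternative via strict convexity of $H_u$ is the same idea in Fenchel language. Indeed, $\partial H_u^*(p)$ is exactly the set of minimizers of $F_{u,p}$, and $f=-H_u^*$.
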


\section{Monge-Amp\`ere-Type Structure of Slicing Support Functions}\label{sec4}
When $K\subset\mathbb{R}^3$, the representation \eqref{infhk} suggests a natural Monge-Amp\`ere-type structure.
We now derive this structure explicitly.

Fix a slicing direction \(\nu\in \mathbb{S}^2\), and choose an oriented orthonormal
basis \(\{e_1,e_2\}\) of the plane \(\nu^\perp\). On each slice \(C(\nu,p)\),
the directional variable \(u\in \mathbb{S}^1\subset \nu^\perp\) can be parametrized by
\begin{align}\label{thetavariable}
u(\theta)=\cos\theta\,e_1+\sin\theta\,e_2,
\qquad
u^\perp(\theta)=-\sin\theta\,e_1+\cos\theta\,e_2 .
\end{align}
Thus the slicing support function
\[
H(\theta,p):=h_\nu(u(\theta),p)
\]
may be regarded as a function on \(\mathbb{S}^1\times I_\nu\), where \(I_\nu=(m,M)\). We define the Monge-Amp\`ere-type operator
\begin{align}\label{ahoperator}
\begin{split}
\mathcal{A}[H]
&:=
\det
\begin{pmatrix}
    H_{\theta\theta}+H & H_{\theta p} \\
    H_{\theta p}      & H_{pp}
\end{pmatrix}  \\
&=
(H_{\theta\theta}+H)H_{pp}-(H_{\theta p})^2,
\end{split}
\end{align}
where subscripts denote partial derivatives with respect to the indicated
variables.

The following theorem expresses $\mathcal{A}[H]$ in terms of the classical support function $h_K$ of $K$. Note that the $C^2$ regularity of $\partial K$, together with its positive Gaussian curvature, implies that
$h_K\in C^2(\mathbb R^3\setminus\{0\})$. Hence $F_{\lambda\lambda}(\theta,\lambda)$ exists for every $(\theta,\lambda)$, and the positive Gaussian curvature further implies
$F_{\lambda\lambda}(\theta,\lambda)>0$.

%Note that the assumption of positive Gaussian curvature on $\partial K$ ensures that $F_{\lambda\lambda}(\theta,\lambda)$ is well defined for all $(\theta,\lambda)$.

\begin{theorem}[Cylindrical Monge-Amp\`ere identity]\label{cylinder}
Let $K\subset \mathbb R^3$ be a strictly convex body with $C^2$-boundary and positive Gaussian curvature on $\partial K$. Fix $\nu\in \mathbb{S}^2$, and choose an oriented
orthonormal basis $\{e_1,e_2\}$ of $\nu^\perp$. Let $m, M$ be the heights of $K$ as in \eqref{height} and let $u(\theta)$, $u^\perp(\theta)$ be defined as in \eqref{thetavariable}. Define
\[
F(\theta,\lambda):=h_K\bigl(u(\theta)+\lambda\nu\bigr).
\]
For $p\in(m,M)$, write the slicing support function as
\begin{equation}\label{defH}
H(\theta,p)
=
h_\nu(u(\theta),p)
=
\inf_{\lambda\in\mathbb R}
\bigl(F(\theta,\lambda)-\lambda p\bigr).
\end{equation}
For each $(\theta,p)\in \mathbb{S}^1\times(m,M)$, let
$\lambda=\lambda(\theta,p)$ denote the unique minimizer in \eqref{defH}.
Then the Monge-Amp\`ere-type operator $\mathcal A[H]$ in \eqref{ahoperator} satisfies
\begin{equation}\label{ma-correct}
\mathcal A[H]
=
-\frac{F_{\theta\theta}+F-\lambda F_\lambda}{F_{\lambda\lambda}}
=
-\frac{F_{\theta\theta}+F-\lambda p}{F_{\lambda\lambda}},
\end{equation}
where all derivatives of $F$ are evaluated at
$(\theta,\lambda(\theta,p))$. Moreover,
\[
\mathcal A[H]<0
\qquad
\text{for all }(\theta,p)\in \mathbb{S}^1\times(m,M).
\]
\end{theorem}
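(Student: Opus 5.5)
The plan is an envelope (Legendre transform) computation in the variable $\lambda$, with $\theta$ treated as a parameter. By Lemma~\ref{lem:unique-minimizer} (with $u=u(\theta)\neq0$ and $p\in(m,M)$) the minimizer $\lambda(\theta,p)$ exists and is unique. It is characterized by the first-order condition
\[
F_\lambda(\theta,\lambda)=p .
\]
Since $F\in C^2$ near $(\theta,\lambda)$ (because $h_K\in C^2(\mathbb R^3\setminus\{0\})$ and $u+\lambda\nu\neq0$) and $F_{\lambda\lambda}>0$, the implicit function theorem makes $\lambda(\theta,p)$ a $C^1$ function of $(\theta,p)$. Its derivatives are
\[
\lambda_p=\frac{1}{F_{\lambda\lambda}},\qquad \lambda_\theta=-\frac{F_{\lambda\theta}}{F_{\lambda\lambda}}.
\]
We then write $H(\theta,p)=F(\theta,\lambda(\theta,p))-\lambda(\theta,p)\,p$.

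Differentiating and using $F_\lambda=p$ (the envelope theorem) gives
\[
H_\theta=F_\theta(\theta,\lambda(\theta,p)),\qquad H_p=-\lambda(\theta,p).
\]
The second relation is consistent with Proposition~\ref{prop:lambda-derivative}. Differentiating once more gives
\[
H_{pp}=-\lambda_p=-\frac{1}{F_{\lambda\lambda}},\qquad H_{\theta p}=-\lambda_\theta=\frac{F_{\lambda\theta}}{F_{\lambda\lambda}},
\]
\[
H_{\theta\theta}=F_{\theta\theta}+F_{\theta\lambda}\lambda_\theta=F_{\theta\theta}-\frac{F_{\theta\lambda}^2}{F_{\lambda\lambda}}.
\]
The mixed partial $H_{\theta p}$ can be computed either as $\partial_p H_\theta=F_{\theta\lambda}\lambda_p$ or as $\partial_\theta H_p$; both give the same value, which serves as a check. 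We also use $H=F-\lambda F_\lambda$ at the minimizer.

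Substituting into \eqref{ahoperator} gives
\[
\mathcal A[H]=\Bigl(F_{\theta\theta}-\tfrac{F_{\theta\lambda}^2}{F_{\lambda\lambda}}+F-\lambda F_\lambda\Bigr)\Bigl(-\tfrac{1}{F_{\lambda\lambda}}\Bigr)-\frac{F_{\lambda\theta}^2}{F_{\lambda\lambda}^2}.
\]
The two $F_{\theta\lambda}^2/F_{\lambda\lambda}^2$ terms cancel, leaving $-(F_{\theta\theta}+F-\lambda F_\lambda)/F_{\lambda\lambda}$. Using $F_\lambda=p$ yields the second form in \eqref{ma-correct}.

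For the sign, $F_{\lambda\lambda}>0$ is given, so it suffices to show $F_{\theta\theta}+F-\lambda F_\lambda>0$. Write $v=u(\theta)+\lambda\nu$. Then $\partial_\theta v=u^\perp$ and $\partial_\theta^2 v=-u$, so
\[
F_{\theta\theta}=D^2h_K(v)[u^\perp,u^\perp]-\nabla h_K(v)\cdot u .
\]
Moreover $F-\lambda F_\lambda=\nabla h_K(v)\cdot v-\lambda\nabla h_K(v)\cdot\nu=\nabla h_K(v)\cdot u$ by Euler's identity. Hence the numerator equals $D^2h_K(v)[u^\perp,u^\perp]$. Since $u^\perp\perp v$ and $D^2h_K(v)$ restricted to $v^\perp$ equals $|v|^{-1}Q_{\bar h_K}(v/|v|)$, which is positive definite under positive Gaussian curvature, this quantity is positive. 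Therefore $\mathcal A[H]<0$.

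The main obstacle is justifying the regularity. We need $\lambda(\theta,p)$ to be $C^1$ and $H$ to be $C^2$ so that the second derivatives exist and the chain rule applies. This rests on the implicit function theorem with $F_{\lambda\lambda}>0$, which requires $h_K\in C^2$ away from the origin; that regularity follows from the $C^2$ boundary with positive curvature, as noted before the theorem. Everything after that is algebraic cancellation.
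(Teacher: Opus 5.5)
Your proof is correct, and your derivation of \eqref{ma-correct} is the same envelope computation the paper uses: the first-order condition $F_\lambda=p$, then $H_\theta=F_\theta$ and $H_p=-\lambda$, implicit differentiation for $\lambda_\theta$ and $\lambda_p$, and cancellation of the $F_{\theta\lambda}^2/F_{\lambda\lambda}^2$ terms. Your implicit-function-theorem justification that $\lambda\in C^1$ and hence $H\in C^2$ is tidier than the paper's appeal to Proposition~\ref{prop:lambda-derivative}, which presupposes differentiability in $p$.

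The genuine difference is in the sign. The paper never uses the identity for it. It argues that $H(\cdot,p)$ is the support function of the strictly convex planar slice $C(\nu,p)$ with $C^2$ boundary, so $H_{\theta\theta}+H>0$ (its radius of curvature). Since also $H_{pp}=-1/F_{\lambda\lambda}<0$, it concludes $(H_{\theta\theta}+H)H_{pp}-H_{\theta p}^2<0$ whatever the size of $H_{\theta p}$. You instead show, via Euler's identity and Lemma~\ref{lem:euclid-sphere}, that the numerator equals $D^2h_K(v)[u^\perp,u^\perp]=|v|^{-1}Q_\omega[u^\perp,u^\perp]$ with $\omega=v/|v|$. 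This is exactly the computation the paper defers to Corollary~\ref{prop:second-derivation-rhs}.

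The paper's route is shorter, but it rests on facts about the slices that it states without proof: that $\partial C(\nu,p)$ is a $C^2$ curve for $p\in(m,M)$, and that its radius of curvature is positive. Your route needs only the positive definiteness of $D^2h_K(v)$ on $v^\perp$. That is the same input that gives $F_{\lambda\lambda}>0$, and it produces the curvature form of $\mathcal A[H]$ along the way.
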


\begin{proof}
By Lemma \ref{lem:unique-minimizer} and Proposition~\ref{prop:lambda-derivative} with \eqref{defH}, there exists a unique function $\lambda=\lambda(\theta,p)$ such that
\begin{align}\label{hplambda}
H(\theta,p)=F(\theta,\lambda(\theta,p))-\lambda(\theta,p)\,p;
\end{align}
in addition, the first-order condition for the minimizer of $F(\theta, \lambda)-\lambda p$ gives
\begin{equation}\label{FOC-correct}
F_\lambda(\theta,\lambda(\theta,p))=p.
\end{equation}

We first compute the derivatives of $H$. Differentiating
\eqref{hplambda} with respect to $\theta$, and using the chain rule, gives
\[
H_\theta
=
F_\theta+F_\lambda\lambda_\theta-p\lambda_\theta.
\]
By \eqref{FOC-correct}, the last two terms cancel and
\begin{equation}\label{Htheta}
H_\theta=F_\theta.
\end{equation}

Next, differentiating \eqref{Htheta} with respect to $\theta$ once more, we obtain
\[
H_{\theta\theta}
=
F_{\theta\theta}+F_{\theta\lambda}\lambda_\theta.
\]
To determine $\lambda_\theta$, we differentiate \eqref{FOC-correct} with respect to $\theta$ and use $F_{\lambda\theta}=F_{\theta\lambda}$ to obtain
$$
F_{\lambda\theta}+F_{\lambda\lambda}\lambda_\theta=0,
$$
which implies $
\lambda_\theta=-\frac{F_{\theta\lambda}}{F_{\lambda\lambda}}
$. Note that $F_{\lambda\lambda}>0$ by the strict convexity assumption of $K$.
Substituting this into the formula for $H_{\theta\theta}$ gives
\begin{equation}\label{Hthetatheta}
H_{\theta\theta}
=
F_{\theta\theta}
-\frac{F_{\theta\lambda}^2}{F_{\lambda\lambda}}.
\end{equation}

Next, differentiating \eqref{hplambda} with respect to $p$, we obtain
\[
H_p
=
F_\lambda\lambda_p-\lambda-p\lambda_p.
\]
Again using \eqref{FOC-correct}, the first and third terms cancel, so
\begin{equation}\label{Hp}
H_p=-\lambda.
\end{equation}

Differentiating \eqref{FOC-correct} with respect to $p$ gives $
F_{\lambda\lambda}\lambda_p=1$,
hence $\lambda_p=\frac{1}{F_{\lambda\lambda}}$.
Therefore, differentiating \eqref{Hp} with respect to $p$, we get
\begin{equation}\label{Hpp}
H_{pp}=-\lambda_p=-\frac{1}{F_{\lambda\lambda}}.
\end{equation}

Differentiating \eqref{Hp} with respect to $\theta$, we obtain
\begin{align}\label{Hthetap}
H_{\theta p}=-\lambda_\theta
=
\frac{F_{\theta\lambda}}{F_{\lambda\lambda}}.
\end{align}

From \eqref{Hthetatheta}, \eqref{hplambda}, and \eqref{FOC-correct}, we obtain
\begin{equation}\label{Hthetatheta-plus-H}
H_{\theta\theta}+H
=
F_{\theta\theta}
-\frac{F_{\theta\lambda}^2}{F_{\lambda\lambda}}
+
F-\lambda F_\lambda.
\end{equation}

Substituting \eqref{Hpp}, \eqref{Hthetap}, and \eqref{Hthetatheta-plus-H} into \eqref{ahoperator}, and using \eqref{FOC-correct}, we find
\begin{align*}
\mathcal A[H]
&=
(H_{\theta\theta}+H)H_{pp}-H_{\theta p}^2 \\
&=
\left(
F_{\theta\theta}
-\frac{F_{\theta\lambda}^2}{F_{\lambda\lambda}}
+
F-\lambda F_\lambda
\right)
\left(-\frac{1}{F_{\lambda\lambda}}\right)
-
\left(\frac{F_{\theta\lambda}}{F_{\lambda\lambda}}\right)^2\\
&
=-\frac{F_{\theta\theta}+F-\lambda p}{F_{\lambda\lambda}}.
\end{align*}
This proves \eqref{ma-correct}.

It remains to prove the sign of $H_{\theta\theta}+H$. For each fixed $p\in (m, M)$, since the slice $C(\nu,p)$ is a strictly convex planar body with $C^2$-boundary, its support function $H(\theta,p)$ satisfies
\[
H_{\theta\theta}+H>0.
\]
Also, by \eqref{Hpp} and the strict convexity in the $\lambda$-variable,
$
F_{\lambda\lambda}>0
$ implies that
$H_{pp}<0.
$
Therefore,
\[
\mathcal A[H]
=
(H_{\theta\theta}+H)H_{pp}-H_{\theta p}^2
<0.
\]
This completes the proof.
\end{proof}

We observe that the right-hand side of equation \eqref{ma-correct} has a natural geometric interpretation. For this purpose, we first recall the following notation on the unit sphere \(\mathbb{S}^2 \subset \mathbb{R}^3\). Let $\nabla^{\mathbb{S}^2}$ be the Levi-Civita connection on
$\mathbb{S}^2$ induced by the Euclidean metric in $\mathbb{R}^3$. For tangent vector fields $X,Y$ on $\mathbb{S}^2$, we have that
\[
\nabla^{\mathbb{S}^2}_X Y=(D_XY)^\top,
\]
where $D$ is the Euclidean connection and $(\cdot)^\top$ denotes projection
onto $T_\omega \mathbb{S}^2=\{v\in\mathbb{R}^3:v\cdot\omega=0\}$ for any $\omega\in \mathbb{S}^2$. For a smooth function $h:\mathbb{S}^2\to\mathbb{R}$, define its spherical Hessian by
\[
\nabla_{\mathbb{S}^2}^2 h(X,Y)
=
X(Yh)-(\nabla^{\mathbb{S}^2}_X Y)h,
\qquad X,Y\in T_\omega\mathbb{S}^2.
\]
The spherical curvature matrix of $K$ at $\omega$ is then
\begin{equation}\label{curvmatrix}
Q_\omega:=Q_{\bar h_K}(\omega)
=
\nabla_{\mathbb{S}^2}^2\bar h_K(\omega)
+\bar h_K(\omega)I,
\end{equation}
where $I$ denotes the identity operator on $T_\omega \mathbb{S}^2$ and \(\bar h_K=h_K|_{\mathbb{S}^2}\).

In the next corollary, we shall use the notation $A[u,v]$ to denote the evaluation of a bilinear
form $A$ on the pair $(u,v)$. Equivalently, after choosing a basis of
$T_\omega \mathbb{S}^2$, if $[A]$ is the matrix representation of $A$ and $[u]$,
$[v]$ are the corresponding column vectors, then
$
A[u,v]=[u]^T[A][v]$.

\begin{corollary}\label{prop:second-derivation-rhs}
Assume the same notation and hypotheses on $K$ and $\partial K$ as in Theorem \ref{cylinder}. Let
\[
v:=u(\theta)+\lambda \nu,
\qquad
\omega:=\frac{v}{|v|}\in \mathbb{S}^2,
\]
where \(\lambda=\lambda(\theta,p)\) denotes the minimizer attained in \(\eqref{defH}\). Define the spherical curvature matrix $Q_\omega$ by \eqref{curvmatrix}.
Then
\begin{equation}\label{eq:rhs-directional-radii}
\mathcal A[H]
=
-(1+\lambda^2)\,
\frac{Q_\omega[u^\perp,u^\perp]}{Q_\omega[\tau,\tau]},
\end{equation}
where
\[
\tau
:=
\frac{\nu-\lambda u(\theta)}{\sqrt{1+\lambda^2}}
\in T_\omega \mathbb{S}^2
\]
is the unit tangent vector orthogonal to \(u^\perp\).

In particular, if \(u^\perp\) and \(\tau\) are principal directions at \(\omega\), then
\[
Q_\omega[u^\perp,u^\perp]=r_1,
\qquad
Q_\omega[\tau,\tau]=r_2,
\]
where \(r_i=1/\kappa_i\) are the corresponding principal radii of curvature. Hence
\begin{equation}\label{eq:principal-curvature-ratio}
\mathcal A[H]
=
-(1+\lambda^2)\frac{r_1}{r_2}
=
-(1+\lambda^2)\frac{\kappa_2}{\kappa_1}.
\end{equation}
\end{corollary}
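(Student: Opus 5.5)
The plan is to start from the first expression in \eqref{ma-correct}, $\mathcal A[H]=-(F_{\theta\theta}+F-\lambda F_\lambda)/F_{\lambda\lambda}$, and rewrite the numerator and denominator separately as Euclidean second derivatives of $h_K$ at $v=u(\theta)+\lambda\nu$. These will then be converted into the spherical curvature matrix $Q_\omega$.

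The key tool is the standard relation between the Euclidean Hessian $D^2h_K$ of the $1$-homogeneous function $h_K$ and $Q_{\bar h_K}$. Since $h_K$ is $1$-homogeneous, $D^2h_K$ is $(-1)$-homogeneous and $D^2h_K(v)\,v=0$. For $\omega\in\mathbb S^2$ and $X,Y\in T_\omega\mathbb S^2$ one has $D^2h_K(\omega)[X,Y]=\nabla^2_{\mathbb S^2}\bar h_K(\omega)[X,Y]+\bar h_K(\omega)\,X\cdot Y=Q_\omega[X,Y]$. This follows by restricting $h_K$ to a great circle through $\omega$ in direction $X$, or via the Gauss formula $D_XY=\nabla^{\mathbb S^2}_XY-(X\cdot Y)\omega$ together with Euler's identity $Dh_K(\omega)\cdot\omega=\bar h_K(\omega)$. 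Combining the two facts gives $D^2h_K(v)[X,Y]=|v|^{-1}Q_\omega[X^\top,Y^\top]$ for arbitrary $X,Y\in\mathbb R^3$, where $^\top$ is the tangential projection at $\omega$. I expect verifying this identity carefully, in particular the sign and the $\bar h_K I$ term, to be the main point requiring care. Everything after it is bookkeeping. Here $|v|=\sqrt{1+\lambda^2}$ because $u\perp\nu$ and both are unit vectors.

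For the denominator, $F_{\lambda\lambda}=D^2h_K(v)[\nu,\nu]$. Since $\nu\cdot\omega=\lambda/\sqrt{1+\lambda^2}$, the tangential part is
\[
\nu^\top=\nu-\frac{\lambda(u+\lambda\nu)}{1+\lambda^2}=\frac{\nu-\lambda u}{1+\lambda^2}=\frac{\tau}{\sqrt{1+\lambda^2}}.
\]
Hence $F_{\lambda\lambda}=(1+\lambda^2)^{-3/2}\,Q_\omega[\tau,\tau]$.

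For the numerator, the chain rule with $u'(\theta)=u^\perp$ and $(u^\perp)'=-u$ gives
\[
F_{\theta\theta}=D^2h_K(v)[u^\perp,u^\perp]-Dh_K(v)\cdot u.
\]
Euler's identity gives $F-\lambda F_\lambda=Dh_K(v)\cdot v-\lambda\,Dh_K(v)\cdot\nu=Dh_K(v)\cdot u$. The first-order terms therefore cancel, leaving $F_{\theta\theta}+F-\lambda F_\lambda=D^2h_K(v)[u^\perp,u^\perp]$. Because $u^\perp$ is orthogonal to both $u$ and $\nu$, it is already tangent at $\omega$, so this equals $(1+\lambda^2)^{-1/2}Q_\omega[u^\perp,u^\perp]$.

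Dividing the numerator by the denominator yields \eqref{eq:rhs-directional-radii}. One checks directly that $\tau$ is a unit vector orthogonal to both $\omega$ and $u^\perp$.

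For the final assertion, recall that the eigenvalues of $Q_\omega$ are the principal radii of curvature at the boundary point with outer normal $\omega$, with eigenvectors the principal directions. This uses the positive Gaussian curvature and that the Gauss map is a diffeomorphism under the $h$-boundary condition. So if $u^\perp,\tau$ are principal directions, then $Q_\omega[u^\perp,u^\perp]=r_1$ and $Q_\omega[\tau,\tau]=r_2$, and $r_i=1/\kappa_i$ gives \eqref{eq:principal-curvature-ratio}.
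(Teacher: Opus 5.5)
Your proposal is correct and follows essentially the same route as the paper. The paper also reduces the numerator to $D^2h_K(v)[u^\perp,u^\perp]$ via the chain rule and Euler's identity, and writes $F_{\lambda\lambda}=D^2h_K(v)[\nu,\nu]$. It then discards the radial part of $\nu$ using $D^2h_K(v)\,v=0$ and applies $D^2h_K(\rho\omega)[X,X]=\rho^{-1}Q_\omega[X,X]$, which it proves in the appendix via geodesics rather than your Gauss-formula argument, to obtain the factor $\rho^2=1+\lambda^2$.
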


\begin{proof}
First, we differentiate $F$ along the curve $u(\theta)$ while holding $\lambda$ fixed, and so
\[
F_\theta
=
\nabla h_K(v)\cdot u^\perp.
\]
Differentiating again with respect to $\theta$ and using $(u^\perp)_\theta=-u$,
we obtain
\[
F_{\theta\theta}
=
D^2 h_K(v)[u^\perp,u^\perp]-\nabla h_K(v)\cdot u.
\]
Now Euler's identity for the $1$-homogeneous support function $h_K$ gives
\[
\nabla h_K(v)\cdot v=h_K(v)=F.
\]
Since $v=u+\lambda \nu$, we have
\[
\nabla h_K(v)\cdot u
=
\nabla h_K(v)\cdot (v-\lambda \nu)
=
F-\lambda\,\nabla h_K(v)\cdot \nu.
\]
But
\begin{align}\label{Flambda}
    F_\lambda=\nabla h_K(v)\cdot \nu,
\end{align} and so
$\nabla h_K(v)\cdot u = F-\lambda F_\lambda$. Substituting this into the expression for $F_{\theta\theta}$, we obtain
\begin{align}\label{eq:DuF-contracted}
D^2 h_K(v)[u^\perp,u^\perp] =F_{\theta\theta}+F-\lambda F_\lambda,
\end{align}
which is the numerator of the right-hand side of \eqref{ma-correct}.

It is well known that for every tangent vector $X \in T_\omega \mathbb{S}^2$ (a proof can be found in Lemma~\ref{lem:euclid-sphere} in the Appendix), for any $\rho>0$,
\begin{equation}\label{eq:euclid-sphere-relation}
D^2 h_K(\rho\omega)[X,X]
=
\frac1{\rho}\,Q_\omega[X,X].
\end{equation}
Since $u^\perp\perp u$ and $u^\perp\perp \nu$, $u^\perp\cdot v=0$, and hence $u^\perp\in T_\omega \mathbb{S}^2$. Set $\rho:=|v|=\sqrt{1+\lambda^2}$.
Then, by \eqref{eq:euclid-sphere-relation}, we have
\begin{equation}\label{eq:numerator-spherical}
D^2 h_K(v)[u^\perp,u^\perp]
=
\frac1{\rho}\,Q_\omega[u^\perp,u^\perp].
\end{equation}

Next, we compute the denominator. Note that $\nu$ is not tangent to $\mathbb{S}^2$ at $\omega$. Decompose it into tangential and radial parts,
\[
\nu =\nu_T+(\nu\cdot\omega)\omega.
\]
Since $D^2 h_K(v)[\omega,\cdot]=0$ for a $1$-homogeneous function, only the tangential part contributes,
\[
D^2 h_K(v)[\nu,\nu]=D^2 h_K(v)[\nu_T,\nu_T].
\]
A direct computation shows that
\[
\nu \cdot\omega=\frac{\lambda}{\rho},
\qquad
\nu_T=\nu-\frac{\lambda}{\rho}\omega
=
\frac{\nu-\lambda u}{1+\lambda^2}.
\]
Define
\[
\tau:=\rho\, \nu_T=\frac{\nu-\lambda u}{\rho}.
\]
Then $\tau\in T_\omega \mathbb{S}^2$ and $|\tau|=1$. Hence $\nu_T=\frac1{\rho}\tau$, and so
\[
D^2 h_K(v)[\nu,\nu]
=
D^2 h_K(v)[\nu_T,\nu_T]
=
\frac1{\rho^2}D^2 h_K(v)[\tau,\tau].
\]
Applying \eqref{eq:euclid-sphere-relation} again,
\begin{equation}\label{eq:denominator-spherical}
D^2 h_K(v)[\nu,\nu]
=
\frac1{\rho^3}Q_\omega[\tau,\tau].
\end{equation}
Finally, by \eqref{Flambda} we have that
\begin{align}\label{Fll}
F_{\lambda\lambda}
=
D^2 h_K(v)[\nu,\nu].
\end{align}

Using \eqref{eq:DuF-contracted}, \eqref{eq:numerator-spherical}, \eqref{eq:denominator-spherical}, and \eqref{Fll}, we conclude that
\[
\mathcal A[H]
=
-
\frac{D^2 h_K(v)[u^\perp,u^\perp]}{D^2 h_K(v)[\nu,\nu]}
=
-\rho^2\frac{Q_\omega[u^\perp,u^\perp]}{Q_\omega[\tau,\tau]}.
\]
Since $\rho^2=1+\lambda^2$, this yields \eqref{eq:rhs-directional-radii}.

Since $\partial K$ has positive Gaussian curvature, the Gauss map $N$ is a local diffeomorphism and the Weingarten map $W=-dN$ is nonsingular. With respect to the outward unit normal, we adopt the convention that the positive principal curvatures $\kappa_1,\kappa_2>0$ are the eigenvalues of $dN$; equivalently, the eigenvalues of $W$ are $-\kappa_1$ and $-\kappa_2$. Since
\[
Q_\omega=(dN)^{-1}=-W^{-1},
\]
the eigenvalues of $Q_\omega$ are the principal radii of curvature
\[
r_1=\frac{1}{\kappa_1},
\qquad
r_2=\frac{1}{\kappa_2}.
\]
In particular, when $u^\perp$ and $\tau$ are principal directions, one has
\[
Q_\omega[u^\perp,u^\perp]=r_1,
\qquad
Q_\omega[\tau,\tau]=r_2,
\]
and hence
\[
\mathcal A[H]
=
-(1+\lambda^2)\frac{r_1}{r_2}
=
-(1+\lambda^2)\frac{\kappa_2}{\kappa_1}.
\]
This proves the corollary.
\end{proof}

In Corollary~\ref{prop:second-derivation-rhs}, it concerns the case when $\tau$ and $u^\perp$ are principal directions.
The final result of this section characterizes when this condition holds everywhere except for the poles, if and only if, up to translation, $K$ is a body of revolution about an axis parallel to $\nu$.

\begin{proposition}\label{prop:principal-revolution}
Let $K \subset \mathbb{R}^3$ be a convex body with $C^2$-boundary and positive Gaussian curvature, and let $h_K \in C^2(\mathbb{S}^2)$ be its support function. Fix $\nu \in \mathbb{S}^2$. For each $\omega \in \mathbb{S}^2 \setminus \{\pm\nu\}$, write
\[
\omega = \sin\alpha \, u(\theta) + \cos\alpha \, \nu,
\qquad
0 < \alpha < \pi,
\]
where $u(\theta) \in \mathbb{S}^2 \cap \nu^\perp$, and define an orthonormal basis $\{u^\perp, \tau\}$ of $T_\omega \mathbb{S}^2$, where
\[
u^\perp(\theta) := \frac{\partial u}{\partial\theta},
\qquad
\tau := \sin\alpha \, \nu - \cos\alpha \, u(\theta).
\]
Then the following statements are equivalent:
\begin{enumerate}
\item For every $\omega \in \mathbb{S}^2 \setminus \{\pm\nu\}$, the vectors $u^\perp$ and $\tau$ are principal directions of $\partial K$ at $N^{-1}(\omega)$, where $N$ is the Gauss map of $K$.
\item There exist a function $\varphi: [-1,1] \to \mathbb{R}$ and a vector $a \in \mathbb{R}^3$ such that
\[
h_K(\omega) = \varphi(\omega \cdot \nu) + a \cdot \omega,
\qquad
\omega \in \mathbb{S}^2.
\]
\item Up to a translation, $K$ is a body of revolution about an axis parallel to $\nu$.
\end{enumerate}
\end{proposition}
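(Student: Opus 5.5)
We prove the cycle $(2)\Leftrightarrow(3)$ and $(1)\Leftrightarrow(2)$. Throughout we work in the spherical coordinates $(\alpha,\theta)$ on $\mathbb{S}^2\setminus\{\pm\nu\}$ and write $\bar h_K(\alpha,\theta)$. In these coordinates the round metric is $d\alpha^2+\sin^2\alpha\,d\theta^2$. The coordinate fields are $\partial_\alpha=\cos\alpha\,u-\sin\alpha\,\nu=-\tau$ and $\partial_\theta=\sin\alpha\,u^\perp$, so $\{u^\perp,\tau\}$ is the normalized coordinate frame. Since $Q_\omega=(dN)^{-1}$ is self-adjoint with the principal radii as eigenvalues, and it shares eigenvectors with $dN^{-1}$, condition (1) is equivalent to the off-diagonal vanishing $Q_\omega[u^\perp,\tau]=0$ for all $\omega\neq\pm\nu$.

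\emph{Off-diagonal term.} Since $I$ has no off-diagonal part, we only need the spherical Hessian. The Christoffel symbols of $d\alpha^2+\sin^2\alpha\,d\theta^2$ give $\nabla_{\partial_\alpha}\partial_\theta=\cot\alpha\,\partial_\theta$. Hence
\[
Q[\partial_\alpha,\partial_\theta]=\bar h_{\alpha\theta}-\cot\alpha\,\bar h_\theta=\sin\alpha\,\partial_\alpha\!\Bigl(\frac{\bar h_\theta}{\sin\alpha}\Bigr).
\]
Therefore (1) holds if and only if $\bar h_\theta/\sin\alpha=g(\theta)$ depends on $\theta$ alone, that is,
\[
\bar h(\alpha,\theta)=\psi(\alpha)+\sin\alpha\,G(\theta)\quad\text{with }G'=g.
\]

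\emph{From (1) to (2).} The main obstacle is to show that $G$ must be of the form $c_1\cos\theta+c_2\sin\theta+c_0$. Since $\sin\alpha\cos\theta$ and $\sin\alpha\sin\theta$ are $\omega\cdot e_1$ and $\omega\cdot e_2$, such terms are linear, and the constant $c_0$ can be absorbed into $\psi$. For this we use the $C^2$ regularity of $\bar h_K$ at the pole $\nu$, i.e.\ as $\alpha\to0$.

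Near $\nu$, write the restriction of the $1$-homogeneous extension to the tangent plane, $h(x)=h_K(x+\nu)$ for $x\in\nu^\perp$, with $x=r(\cos\theta,\sin\theta)$ and $r=\tan\alpha$. Then
\[
h(x)=\sqrt{1+r^2}\,\bigl(\psi(\alpha)+\sin\alpha\,G(\theta)\bigr)=\sqrt{1+r^2}\,\psi(\arctan r)+r\,G(\theta).
\]
The first term is radial, hence $C^2$ away from the origin. So $rG(\theta)$ must extend to a $C^2$ function on a punctured neighbourhood of $0$ that is also $C^1$ at $0$, because $h$ is $C^2$ on $\nu^\perp$. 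For this extension, the radial piece $\sqrt{1+r^2}\,\psi(\arctan r)$ contributes only through its value at $r=0$ plus an $O(r)$ even-in-$x$ term.

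The cleanest route is to compare derivatives at $0$. Since $h$ is $C^1$ at $0$, $h(x)=h(0)+\nabla h(0)\cdot x+o(r)$, so
\[
\sqrt{1+r^2}\,\psi(\arctan r)-\psi(0)+rG(\theta)=\nabla h(0)\cdot x+o(r).
\]
Dividing by $r$ and letting $r\to0$ gives $G(\theta)=b\cdot u(\theta)-\psi'(0^+)$. Here $\psi'(0^+)$ exists because the left side forces the radial difference quotient to converge. This shows $G$ is linear in $u(\theta)$ plus a constant. With $a=b$ and $\varphi(\cos\alpha)=\psi(\alpha)-\psi'(0^+)\sin\alpha$, we obtain (2), and the formula extends to the poles by continuity.

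\emph{From (2) to (1).} Conversely, under (2) the linear part has zero $Q$, since $Q$ of a linear function vanishes. The function $\varphi(\omega\cdot\nu)$ is $\theta$-independent, so $\bar h_\theta=0$ and the off-diagonal term vanishes, giving (1).

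\emph{(2)$\Leftrightarrow$(3).} Translating $K$ by $-a$ replaces $h_K$ by $h_K-a\cdot\omega$. A body $K'$ is invariant under all rotations about the axis $\mathbb{R}\nu$ if and only if $h_{K'}(R\omega)=h_{K'}(\omega)$ for all such rotations $R$, and this holds if and only if $h_{K'}$ is a function of $\omega\cdot\nu$ alone. This uses the uniqueness of bodies with a given support function. The axis is then $a+\mathbb{R}\nu$.
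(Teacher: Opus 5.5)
Your proof is correct and follows essentially the paper's route. You use the same reduction of (1) to $\partial_\alpha(\bar h_\theta/\sin\alpha)=0$ and the same splitting $\bar h=\psi(\alpha)+\sin\alpha\,G(\theta)$, and you then use differentiability at the pole $\nu$ to force $G$ to be linear in $u(\theta)$ up to a constant. The only difference there is in how the radial part is removed: the paper subtracts the rotational average and uses that a differentiable, positively $1$-homogeneous function of $(\sin\alpha\cos\theta,\sin\alpha\sin\theta)$ is linear, while you divide the first-order expansion of $h_K(x+\nu)$ by $r$ and note that the radial difference quotient is $\theta$-independent, which works equally well.

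One small caution: your aside that $rG(\theta)$ itself must be $C^1$ at $0$ is false when $G$ carries a nonzero constant. The splitting is only determined up to $(\psi+c\sin\alpha,\,G-c)$; for example, the unit ball with $\psi=1-\sin\alpha$ and $G\equiv1$ gives $rG=|x|$. Your difference-quotient argument never uses that aside, so the proof stands.
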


\begin{proof}
We first prove that $(1)\Rightarrow(2)$. Since $\partial K$ has positive Gaussian curvature, $Q_\omega$ is positive definite and its eigenvectors correspond to the principal directions of $\partial K$ at $N^{-1}(\omega)$.
Because $\{u^\perp, \tau\}$ is an orthonormal basis of $T_\omega \mathbb{S}^2$ and $Q_\omega$ is symmetric, the vectors $u^\perp$ and $\tau$ are principal directions if and only if
$Q_\omega [ u^\perp, \tau] = 0$.
In addition, the orthonormality of $u^\perp$ and $\tau$ implies that
$Q_\omega [u^\perp, \tau] = \nabla_{\mathbb{S}^2}^2h_K(u^\perp, \tau)$.
Set
\[
E_\alpha := \frac{\partial\omega}{\partial\alpha} = \cos\alpha \, u(\theta) - \sin\alpha \, \nu = -\tau
\]
and
\[
E_\theta := \frac{1}{\sin\alpha} \frac{\partial\omega}{\partial\theta} = u^\perp(\theta).
\]
The standard round metric on $\mathbb{S}^2$ is $d\alpha^2 + \sin^2\alpha \ d\theta^2$, and its Levi-Civita connection satisfies
$D_{E_\theta}E_\alpha =\nabla^{\mathbb{S}^2}_{E_\theta}E_\alpha= \cot\alpha \, E_\theta$.
Hence
\[
\begin{aligned}
\nabla_{\mathbb{S}^2}^2h_K(E_\theta, E_\alpha)
&= E_\theta(E_\alpha h_K) - (\nabla^{\mathbb{S}^2}_{E_\theta}E_\alpha)h_K \\
&= \frac{1}{\sin\alpha}
\left(
\frac{\partial^2h_K}{\partial\theta\partial\alpha} - \cot\alpha \frac{\partial h_K}{\partial\theta}
\right).
\end{aligned}
\]
Consequently, $Q_\omega [ u^\perp, \tau] = 0$ is equivalent to
\begin{equation}\label{eq:mixed-spherical-hessian}
\frac{\partial^2h_K}{\partial\theta\partial\alpha} - \cot\alpha \frac{\partial h_K}{\partial\theta} = 0.
\end{equation}

Equation~\eqref{eq:mixed-spherical-hessian} can be rewritten as
\[
\frac{\partial}{\partial\alpha}
\left(
\frac{1}{\sin\alpha}
\frac{\partial h_K}{\partial\theta}
\right) = 0.
\]
Therefore, there exists a $2\pi$-periodic function $b(\theta)$ such that
\[
\frac{\partial h_K}{\partial\theta} = \sin\alpha \, b(\theta).
\]
Integrating with respect to $\theta$, we obtain
\begin{equation}\label{eq:local-support-form}
h_K(\theta,\alpha) = A(\alpha) + \sin\alpha \, B(\theta)
\end{equation}
for suitable functions $A$ and $B$.
Define the rotational average
\[
\overline{h}(\alpha) = \frac{1}{2\pi} \int_0^{2\pi} h_K(\theta,\alpha) \, d\theta.
\]
Then $\overline{h}$ is rotationally symmetric about the $\nu$-axis, and
\[
h_K(\theta,\alpha) - \overline{h}(\alpha) = \sin\alpha \, \widetilde{B}(\theta),
\]
where $\widetilde{B}$ has zero average.
Near the north pole, introduce the local coordinates
\[
x = \sin\alpha\cos\theta,
\qquad
y = \sin\alpha\sin\theta.
\]
The function
\[
(x,y) \longmapsto \sqrt{x^2+y^2} \, \widetilde{B}\bigl(\arg(x+iy)\bigr)
\]
is differentiable at $(0,0)$ and positively homogeneous of degree one.
Every differentiable, positively homogeneous function of degree one is linear. Hence there exist constants $a_1, a_2 \in \mathbb{R}$ such that
\[
\sin\alpha \, \widetilde{B}(\theta) = a_1\sin\alpha\cos\theta + a_2\sin\alpha\sin\theta.
\]
Choose an orthonormal basis $\{v_1, v_2, \nu\}$ such that
\[
u(\theta) = \cos\theta \, v_1 + \sin\theta \, v_2,
\]
and set $a_\perp := a_1v_1 + a_2v_2$.
Since $a_\perp \cdot \omega = a_1\sin\alpha\cos\theta + a_2\sin\alpha\sin\theta$, we obtain
\[
h_K(\omega) = \varphi(\omega\cdot\nu) + a_\perp \cdot \omega
\]
for some function $\varphi$.

Next, we show that $(2)\Rightarrow(3)$. Assume that $(2)$ holds. The support function of the translated body $K-a$ is
\[
h_{K-a}(\omega) = h_K(\omega) - a \cdot \omega = \varphi(\omega\cdot\nu).
\]
Thus, for every rotation $R \in SO(3)$ satisfying $R\nu = \nu$,
\[
h_{K-a}(R\omega) = h_{K-a}(\omega).
\]
Since a convex body is uniquely determined by its support function, it follows that
$R(K-a) = K-a$.
Hence $K-a$ is a body of revolution about the $\nu$-axis.

Finally, we show that $(3)\Rightarrow(1)$. Suppose that, up to a translation, $K$ is a body of revolution about an axis parallel to $\nu$. Then its support function has the form $h_K(\omega) = \varphi(\omega\cdot\nu) + a \cdot \omega$. In the coordinates $(\theta,\alpha)$, a direct calculation gives
\[
\frac{\partial^2h_K}{\partial\theta\partial\alpha} - \cot\alpha \frac{\partial h_K}{\partial\theta} = 0.
\]
It follows that $Q_\omega [u^\perp, \tau] = 0$.
Since $Q_\omega$ is symmetric, the orthonormal vectors $u^\perp$ and $\tau$ are eigenvectors of $Q_\omega$. Therefore, they correspond to the principal directions of $\partial K$ at $N^{-1}(\omega)$.
\end{proof}

\section{Iterated Slicing Support Functions in the Higher-Codimensional Setting}\label{sec5}
The geometry of a convex body \(K \subset \mathbb{R}^n\) can be decomposed through successive slicing. In this section, we develop a recursive framework for the associated slicing support functions, derive the corresponding determinant identities, and show that they exhibit a nested Monge-Amp\`ere structure.

Let \(K \subset \mathbb{R}^n\) be a convex body and $\{\nu_1,\dots,\nu_k\}$ be an orthonormal family in \(\mathbb{R}^n\), representing \(k\) successive slicing directions, where \(1\leq k\leq n-1\). Define the \(k\)-dimensional slicing subspace by $V_k=\operatorname{span}\{\nu_1,\dots,\nu_k\}$, and let $E_{n-k}=V_k^\perp$ be its orthogonal complement in $\mathbb{R}^n$. Thus \(E_{n-k}\) is an \((n-k)\)-dimensional subspace on which the $k$-th slicing support function, introduced in \eqref{eq:global_inf}, is defined. Consequently, every vector \(v\in\mathbb{R}^n\) admits the unique orthogonal decomposition
\[
v=u+\sum_{j=1}^k \lambda_j\nu_j,
\qquad
u\in E_{n-k},\quad \lambda_j\in\mathbb{R}.
\]

\begin{definition}[Iterated Slicing Support Function]\label{defiterate}
Let $\boldsymbol{\nu}_k = (\nu_1, \dots, \nu_k)$ be an ordered $k$-tuple of slicing directions, and let $\mathbf{p}_k = (p_1, \dots, p_k) \in \mathbb{R}^k$. When the context is clear, we write $\boldsymbol{\nu}$ instead of $\boldsymbol{\nu}_k$.

For a convex body $K \subset \mathbb{R}^n$, we recall that the $k$-fold sliced section $K^{(k)}(\boldsymbol{\nu}_k, \mathbf{p}_k)$ is defined as in \eqref{ktimesslide}.
Note that when $k=1$, $K^{(1)}(\nu_1, p_1)=C(\nu_1, p_1)$, the section of $K$ sliced by the affine hyperplane parallel to $\nu_1^\perp$. Whenever $K^{(k)}(\boldsymbol{\nu}_k, \mathbf{p}_k) \neq \emptyset$, the $k$-th \emph{iterated slicing support function} $h_{\boldsymbol{\nu}_k}^{(k)}: E_{n-k} \times \mathbb{R}^k \to \mathbb{R}$ is defined as the support function of $K^{(k)}(\boldsymbol{\nu}_k, \mathbf{p}_k)$ restricted to $E_{n-k}$, given by
\[
h_{\boldsymbol{\nu}_k}^{(k)}(u, \mathbf{p}_k) = \sup_{x \in K^{(k)}(\boldsymbol{\nu}_k, \mathbf{p}_k)} x \cdot u, \quad \text{for } u \in E_{n-k}.
\]
Using the same convention as for $k=1$, we set
$h_{\boldsymbol{\nu}_k}^{(k)}(u,\mathbf p_k)=-\infty$ whenever $K^{(k)}(\boldsymbol{\nu}_k,\mathbf p_k)=\emptyset.
$
\end{definition}
We now derive the following recursive formula for the slicing support functions for $1\leq k\leq n-1$.

\begin{theorem}[Iterated Representation]\label{iteratedrepre}
Let $K$ be a convex body in $\mathbb R^n$. For every
$1\le k\le n-1$, the $k$-th slicing support function satisfies
\eqref{eq:global_inf}. Moreover, for $2\le k\le n-1$, it satisfies
\begin{equation}\label{eq:recursive_inf}
h_{(\boldsymbol{\tilde\nu},\nu_k)}^{(k)}(u,\mathbf p_k)
=\inf_{\lambda_k\in\mathbb R}
\left(
h_{\boldsymbol{\tilde\nu}}^{(k-1)}
(u+\lambda_k\nu_k,\mathbf p_{k-1})-\lambda_kp_k
\right),
\end{equation}
where $\boldsymbol{\tilde\nu}=(\nu_1,\ldots,\nu_{k-1})$ and $(\boldsymbol{\tilde{\nu}},\nu_k)=\boldsymbol{\nu}_k$.
\end{theorem}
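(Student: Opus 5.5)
The plan is to prove the recursive formula \eqref{eq:recursive_inf} first, by applying Proposition~\ref{basic} to a lower-dimensional convex body, and then to obtain \eqref{eq:global_inf} by induction on $k$. The case $k=1$ of \eqref{eq:global_inf} is exactly Proposition~\ref{basic}.

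For \eqref{eq:recursive_inf}, fix $2\le k\le n-1$ and $\mathbf p_{k-1}$, and consider $L:=K^{(k-1)}(\boldsymbol{\tilde\nu},\mathbf p_{k-1})$.

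\emph{Empty case.} If $L=\emptyset$, both sides equal $-\infty$. The left side is $-\infty$ by convention, since $K^{(k)}\subset L$. The right side is $-\infty$ because each $h^{(k-1)}_{\boldsymbol{\tilde\nu}}$ term is $-\infty$.

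\emph{Nonempty case.} Otherwise $L$ is a nonempty compact convex set lying in the affine subspace $A=\sum_{j<k}p_j\nu_j+E_{n-k+1}$. Proposition~\ref{basic} should not be applied verbatim to $L$, because $L$ may be lower-dimensional and sits in $\mathbb R^n$. Its proof, however, only used that $K$ is nonempty, compact and convex, and that $h_K$ is finite; interior points were never needed. I will therefore note explicitly that Proposition~\ref{basic} holds for any nonempty compact convex set in $\mathbb R^n$.

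Apply it with direction $\nu_k$, height $p_k$, and $u\in E_{n-k}\subset\nu_k^\perp$. The slice of $L$ at height $p_k$ is $L\cap\{x\cdot\nu_k=p_k\}=K^{(k)}(\boldsymbol\nu_k,\mathbf p_k)$. This gives
\[
h^{(k)}_{\boldsymbol\nu_k}(u,\mathbf p_k)=\inf_{\lambda_k}\bigl(h_L(u+\lambda_k\nu_k)-\lambda_kp_k\bigr).
\]
Since $u+\lambda_k\nu_k\in E_{n-k+1}$, we have $h_L(u+\lambda_k\nu_k)=h^{(k-1)}_{\boldsymbol{\tilde\nu}}(u+\lambda_k\nu_k,\mathbf p_{k-1})$ by Definition~\ref{defiterate}, which yields \eqref{eq:recursive_inf}.

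\emph{Induction for \eqref{eq:global_inf}.} Assume \eqref{eq:global_inf} holds at level $k-1$. Substitute it into \eqref{eq:recursive_inf} with $u$ replaced by $u+\lambda_k\nu_k\in E_{n-k+1}$. Combining the two infima into a single infimum over $\boldsymbol\lambda\in\mathbb R^k$ is legitimate because $\inf_a\inf_b=\inf_{(a,b)}$ holds for extended-real-valued functions. This gives \eqref{eq:global_inf} at level $k$.

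The main obstacle I expect is the empty-slice bookkeeping in this induction. When $L\ne\emptyset$ but the level-$(k-1)$ value is $-\infty$ for some inner arguments, or when $L=\emptyset$, the direct global formula must still return $-\infty$. Iterated infima handle this automatically, since $-\infty$ propagates. As a fallback, I would also check \eqref{eq:global_inf} directly by the weak-duality inequality (the first half of the proof of Proposition~\ref{basic}, with penalties $\sum\lambda_j(x\cdot\nu_j-p_j)$). Equality would then follow from the recursion, so the induction only needs to be carried out carefully in the extended reals.
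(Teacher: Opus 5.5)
Your proposal is correct and follows essentially the same route as the paper. You use induction on $k$, apply the one-step formula (Proposition~\ref{basic}) to the $(k-1)$-fold slice to get \eqref{eq:recursive_inf}, then substitute the inductive hypothesis and merge the two infima. Your additional remarks make rigorous what the paper passes over with ``apply the one-step slicing formula inside this affine space'': you note that Proposition~\ref{basic} only needs a nonempty compact convex set, so it applies directly in $\mathbb R^n$ to a possibly lower-dimensional slice, and you handle the empty slice separately.
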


\begin{proof}
For \(k=1\), the assertion is exactly Proposition~\ref{basic}. Namely, for \(u\in E_{n-1}=\nu_1^\perp\),
\[
h_{\nu_1}^{(1)}(u,p_1) = \inf_{\lambda_1\in\mathbb R} \left( h_K(u+\lambda_1\nu_1)-\lambda_1p_1 \right).
\]

Now assume that the result holds for \(k-1\). Let $\boldsymbol{\tilde\nu}=(\nu_1,\dots,\nu_{k-1})$ and $\mathbf p_{k-1}=(p_1,\dots,p_{k-1})$.
Then the \((k-1)\)-fold slice
\[
K^{(k-1)}(\boldsymbol{\tilde\nu}, \mathbf p_{k-1}) = K\cap \{x \in \mathbb{R}^n \mid x\cdot \nu_j=p_j,\ j=1,\dots,k-1\}
\]
lies in the $(n-k+1)$-dimensional affine subspace
\[
A_{k-1}(\boldsymbol{\tilde\nu}, \mathbf p_{k-1}) = \{x\in\mathbb R^n \mid x\cdot \nu_j=p_j,\ j=1,\dots,k-1\},
\]
whose direction space is \(E_{n-k+1}\).

Since \(\nu_k\in E_{n-k+1}\), we may apply the one-step slicing formula inside this affine space. Thus, for \(u\in E_{n-k}\), we obtain
\[
h_{(\boldsymbol{\tilde\nu},\nu_k)}^{(k)}(u,\mathbf p_k) = \inf_{\lambda_k\in\mathbb R} \left( h_{\boldsymbol{\tilde\nu}}^{(k-1)} (u+\lambda_k\nu_k,\mathbf p_{k-1}) - \lambda_k p_k \right).
\]
This proves the recursive formula \eqref{eq:recursive_inf}.

By the induction hypothesis,
\[
h_{\boldsymbol{\tilde\nu}}^{(k-1)} (u+\lambda_k\nu_k,\mathbf p_{k-1}) = \inf_{\boldsymbol{\tilde\lambda}\in\mathbb R^{k-1}} \left( h_K\left( u+\lambda_k\nu_k+\sum_{j=1}^{k-1}\lambda_j\nu_j \right) - \sum_{j=1}^{k-1}\lambda_jp_j \right),
\]
where \(\boldsymbol{\tilde\lambda}=(\lambda_1,\dots,\lambda_{k-1})\). Substituting this into the recursive formula \eqref{eq:recursive_inf} gives
\[
h_{(\boldsymbol{\tilde\nu},\nu_k)}^{(k)}(u,\mathbf p_k) = \inf_{\lambda_k\in\mathbb R} \inf_{\boldsymbol{\tilde\lambda}\in\mathbb R^{k-1}} \left( h_K\left( u+\sum_{j=1}^{k}\lambda_j\nu_j \right) - \sum_{j=1}^{k}\lambda_jp_j \right).
\]
Since the variables \((\boldsymbol{\tilde\lambda},\lambda_k)\) range over \(\mathbb R^{k-1}\times\mathbb R\cong\mathbb R^k\), we can combine the two infima into a single infimum and obtain
\[
h_{\boldsymbol{\nu}}^{(k)}(u,\mathbf p_k) = \inf_{\boldsymbol{\lambda}\in\mathbb R^k} \left( h_K\left( u+\sum_{j=1}^{k}\lambda_j\nu_j \right) - \sum_{j=1}^{k}\lambda_jp_j \right).
\]
This establishes the global formula \eqref{eq:global_inf} and completes the induction.
\end{proof}

Finally, we shall prove the nested structure of the Hessian determinant for the slicing support functions.
By the full Hessian of \(h_{\boldsymbol{\nu}}^{(k)}\), we mean the Hessian
with respect to all variables \((u,\mathbf p_k)\in E_{n-k}\times\mathbb R^k\).
More precisely, after choosing an orthonormal basis of \(E_{n-k}\) and writing \(u=(u_1,\dots,u_{n-k})\) and $\mathbf p_k=(p_1,\cdots, p_k)\in \mathbb{R}^k$, the full Hessian of \(h_{\boldsymbol{\nu}}^{(k)}\) is an \(n\times n\) matrix given by
\[
D^2_{(u,\mathbf p_k)}h_{\boldsymbol{\nu}}^{(k)}
=
\begin{pmatrix}
h_{uu} & h_{u\mathbf p}\\
h_{\mathbf p u} & h_{\mathbf p\mathbf p}
\end{pmatrix},
\]
where
\begin{align*} h_{uu}
&= \left( \frac{\partial^2 h_{\boldsymbol{\nu}}^{(k)}} {\partial u_\alpha\,\partial u_\beta} \right)_{1\le \alpha,\beta\le n-k}, \quad h_{u\mathbf p} = \left( \frac{\partial^2 h_{\boldsymbol{\nu}}^{(k)}} {\partial u_\alpha\,\partial p_j} \right)_{\substack{1\le \alpha\le n-k\\ 1\le j\le k}}, \\
h_{\mathbf p u}
&= \left( \frac{\partial^2 h_{\boldsymbol{\nu}}^{(k)}} {\partial p_i\,\partial u_\beta} \right)_{\substack{1\le i\le k\\ 1\le \beta\le n-k}}, \quad h_{\mathbf p\mathbf p} = \left( \frac{\partial^2 h_{\boldsymbol{\nu}}^{(k)}} {\partial p_i\,\partial p_j} \right)_{1\le i,j\le k}.
\end{align*}

\begin{theorem}[Full Hessian Determinant Structure]\label{fullhessian}
Let \(K\subset\mathbb R^n\) be a convex body and define
\[
F(u,\boldsymbol{\lambda})
=
h_K\left(u+\sum_{j=1}^k\lambda_j\nu_j\right),
\qquad
(u,\boldsymbol{\lambda})\in E_{n-k}\times\mathbb R^k.
\]
Assume that \(F\) is \(C^2\) near the point under consideration, that the minimizer
\(\boldsymbol{\lambda}=\boldsymbol{\lambda}(u,\mathbf p_k)\) of
\[
F(u,\boldsymbol{\lambda})-\boldsymbol{\lambda}\cdot\mathbf p_k
\]
is unique and depends smoothly on \((u,\mathbf p_k)\), and that
\(F_{\boldsymbol{\lambda}\boldsymbol{\lambda}}\) is invertible at the minimizer. Then
\begin{equation}\label{eq:full_hessian_det_identity}
\det\left(D^2_{(u,\mathbf p_k)}
h_{\boldsymbol{\nu}}^{(k)}\right)
=
(-1)^k
\frac{\det(F_{uu})}
{\det(F_{\boldsymbol{\lambda}\boldsymbol{\lambda}})}.
\end{equation}
Here all derivatives of \(F\) are evaluated at
\((u,\boldsymbol{\lambda}(u,\mathbf p_k))\).
\end{theorem}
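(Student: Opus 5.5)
Our plan is to differentiate the envelope representation of Theorem~\ref{iteratedrepre}, exactly as in the proof of Theorem~\ref{cylinder}, and then take the determinant with a Schur complement. Write $h=h^{(k)}_{\boldsymbol{\nu}}$ and $\boldsymbol{\lambda}=\boldsymbol{\lambda}(u,\mathbf p_k)$. By \eqref{eq:global_inf},
\[
h(u,\mathbf p_k)=F(u,\boldsymbol{\lambda})-\boldsymbol{\lambda}\cdot\mathbf p_k .
\]
The minimizer is interior and $F$ is $C^1$ there, so it satisfies the first-order condition $F_{\boldsymbol{\lambda}}(u,\boldsymbol{\lambda})=\mathbf p_k$.

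The envelope theorem then gives the first derivatives. Differentiating the identity and using the first-order condition to cancel the terms containing $\partial\boldsymbol{\lambda}$, we get
\[
h_u=F_u(u,\boldsymbol{\lambda}),\qquad h_{\mathbf p}=-\boldsymbol{\lambda}.
\]
For the derivatives of $\boldsymbol{\lambda}$, differentiate the first-order condition. In $u$ this gives $F_{\boldsymbol{\lambda}u}+F_{\boldsymbol{\lambda}\boldsymbol{\lambda}}\,\partial_u\boldsymbol{\lambda}=0$. In $\mathbf p$ it gives $F_{\boldsymbol{\lambda}\boldsymbol{\lambda}}\,\partial_{\mathbf p}\boldsymbol{\lambda}=I_k$. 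Invertibility of $F_{\boldsymbol{\lambda}\boldsymbol{\lambda}}$ therefore yields
\[
\partial_u\boldsymbol{\lambda}=-F_{\boldsymbol{\lambda}\boldsymbol{\lambda}}^{-1}F_{\boldsymbol{\lambda}u},\qquad \partial_{\mathbf p}\boldsymbol{\lambda}=F_{\boldsymbol{\lambda}\boldsymbol{\lambda}}^{-1}.
\]
The assumed smoothness of $\boldsymbol{\lambda}(u,\mathbf p_k)$ justifies these chain-rule steps; alternatively it follows from the implicit function theorem. Differentiating $h_u$ and $h_{\mathbf p}$ once more gives the blocks
\[
h_{uu}=F_{uu}-F_{u\boldsymbol{\lambda}}F_{\boldsymbol{\lambda}\boldsymbol{\lambda}}^{-1}F_{\boldsymbol{\lambda}u},\quad
h_{u\mathbf p}=F_{u\boldsymbol{\lambda}}F_{\boldsymbol{\lambda}\boldsymbol{\lambda}}^{-1},\quad
h_{\mathbf p\mathbf p}=-F_{\boldsymbol{\lambda}\boldsymbol{\lambda}}^{-1}.
\]

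Next we take the Schur complement with respect to the invertible block $h_{\mathbf p\mathbf p}$:
\[
\det D^2h=\det(h_{\mathbf p\mathbf p})\,\det\bigl(h_{uu}-h_{u\mathbf p}h_{\mathbf p\mathbf p}^{-1}h_{\mathbf p u}\bigr).
\]
The correction term is
\[
h_{u\mathbf p}h_{\mathbf p\mathbf p}^{-1}h_{\mathbf p u}
=F_{u\boldsymbol{\lambda}}F_{\boldsymbol{\lambda}\boldsymbol{\lambda}}^{-1}(-F_{\boldsymbol{\lambda}\boldsymbol{\lambda}})F_{\boldsymbol{\lambda}\boldsymbol{\lambda}}^{-1}F_{\boldsymbol{\lambda}u}
=-F_{u\boldsymbol{\lambda}}F_{\boldsymbol{\lambda}\boldsymbol{\lambda}}^{-1}F_{\boldsymbol{\lambda}u},
\]
so the Schur complement is exactly $F_{uu}$. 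Since $\det(-F_{\boldsymbol{\lambda}\boldsymbol{\lambda}}^{-1})=(-1)^k/\det F_{\boldsymbol{\lambda}\boldsymbol{\lambda}}$, this gives \eqref{eq:full_hessian_det_identity}.

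The main point to check is the justification of the first-order condition and of the chain rule. The first-order condition needs the minimizer to be attained in the interior, which it is because it minimizes over all of $\mathbb R^k$, and $F$ is differentiable there. The chain rule is covered by the stated hypotheses. One could instead present the computation as: the Hessian of a partial Legendre transform equals the Schur-complement inverse of the Hessian of $F$. The direct block computation above is more transparent, so we will use it. Finally, we will remark that for $k=1$, $n=3$ the Cartesian Hessian of the $1$-homogeneous function $h$ in $u$ is degenerate. This explains why Theorem~\ref{cylinder} uses the cylindrical operator $\mathcal A[H]$ instead, and the present identity should be read in the Cartesian variables of the statement.
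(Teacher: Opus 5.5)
Your proposal is correct and follows the paper's proof essentially step for step. You use the envelope differentiation with the first-order condition $F_{\boldsymbol{\lambda}}=\mathbf p_k$, and implicit differentiation giving $\partial_{\mathbf p}\boldsymbol{\lambda}=F_{\boldsymbol{\lambda}\boldsymbol{\lambda}}^{-1}$ and $\partial_u\boldsymbol{\lambda}=-F_{\boldsymbol{\lambda}\boldsymbol{\lambda}}^{-1}F_{\boldsymbol{\lambda}u}$. From these you obtain the same three Hessian blocks, then take the Schur complement with respect to $h_{\mathbf p\mathbf p}=-F_{\boldsymbol{\lambda}\boldsymbol{\lambda}}^{-1}$, which collapses exactly to $F_{uu}$ as in the paper.
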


\begin{proof}
Write
\[
A=F_{\boldsymbol{\lambda}\boldsymbol{\lambda}}
=
\left[
\frac{\partial^2 F}{\partial \lambda_i \partial \lambda_j}
\right],
\qquad
B=F_{u\boldsymbol{\lambda}}
=
\left[
\frac{\partial^2 F}{\partial u_\alpha \partial \lambda_i}
\right],
\]
and
\[
C=F_{uu}
=
\left[
\frac{\partial^2 F}{\partial u_\alpha \partial u_\beta}
\right]
\]
for $1\leq i,j\leq k$ and $1 \leq \alpha,\beta \leq n-k$. Throughout, the notation \([M_{ij}]\) denotes the matrix
whose \((i,j)\)-entry is \(M_{ij}\). By symmetry of the Hessian, we note that $F_{\boldsymbol{\lambda}u} = B^T$.

The first-order condition for the minimizer implies $F_{\boldsymbol{\lambda}}(u,\boldsymbol{\lambda})
=
\mathbf p_k.
$
Differentiating this identity with respect to \(\mathbf p_k\) gives
$A\boldsymbol{\lambda}_{\mathbf p}=I$,
and hence $\boldsymbol{\lambda}_{\mathbf p}=A^{-1}$.
Differentiating the same identity with respect to \(u\) gives $F_{\boldsymbol{\lambda}u}
+
A\boldsymbol{\lambda}_{u}=0$,
so that
$\boldsymbol{\lambda}_{u}
=
-A^{-1}B^T$.

Now consider the function $
h_{\boldsymbol{\nu}}^{(k)}(u,\mathbf p_k)
=
F(u,\boldsymbol{\lambda}(u,\mathbf p_k))
-
\boldsymbol{\lambda}(u,\mathbf p_k)\cdot \mathbf p_k$ at the minimizer. Using the chain rule along with the first-order condition
\(F_{\boldsymbol{\lambda}}=\mathbf p_k\), the terms involving derivatives of $\boldsymbol{\lambda}$ cancel out, yielding
\[
h_{\mathbf p}
=
-\boldsymbol{\lambda},
\qquad
h_u
=
F_u.
\]
Differentiating once more yields the second-order blocks:
\[
h_{\mathbf p\mathbf p}
=
-\boldsymbol{\lambda}_{\mathbf p}
=
-A^{-1},
\]
\[
h_{u\mathbf p}
=
F_{u\boldsymbol{\lambda}}\boldsymbol{\lambda}_{\mathbf p}
=
BA^{-1},
\]
and
\[
h_{uu}
=
F_{uu}
+
F_{u\boldsymbol{\lambda}}\boldsymbol{\lambda}_u
=
C-BA^{-1}B^T.
\]
Therefore, the full Hessian matrix of
\(h_{\boldsymbol{\nu}}^{(k)}\) with respect to \((u,\mathbf p_k)\) can be written as
\[
D^2_{(u,\mathbf p_k)}h_{\boldsymbol{\nu}}^{(k)}
=
\begin{pmatrix}
C-BA^{-1}B^T & BA^{-1} \\
A^{-1}B^T & -A^{-1}
\end{pmatrix}.
\]

Recall that the Schur complement formula states that if
\[
\mathcal M=
\begin{pmatrix}
\mathcal P & \mathcal Q\\
\mathcal R & \mathcal S
\end{pmatrix}
\]
and \(\mathcal S\) is invertible, then
$
\det \mathcal M
=
\det(\mathcal S)
\det\left(\mathcal P-\mathcal Q\mathcal S^{-1}\mathcal R\right).
$
We compute the determinant by using the Schur complement formula with respect to the lower-right block \(-A^{-1}\). Since $A$ is a $k\times k$ matrix, we have
\begin{align*}
&\hspace{.5cm}\det\left(D^2_{(u,\mathbf p_k)}h_{\boldsymbol{\nu}}^{(k)}\right) \\
&=
\det(-A^{-1})
\det\left[
\left(C-BA^{-1}B^T\right)
-
\left(BA^{-1}\right)(-A)\left(A^{-1}B^T\right)
\right] \\
&=
\det(-A^{-1})\det(C) =(-1)^k\frac{1}{\det A}\det(C)\\
&=(-1)^k \frac{ \det(F_{uu})}{\det(F_{\boldsymbol{\lambda}\boldsymbol{\lambda}})}.
\end{align*}
This completes the proof.
\end{proof}

\section{Appendix}\label{sec6}
\subsection{Derivation of the Hessian formula for the support function}
Let $K\subset \R^3$ be a convex body whose support function $h_K$ is of class $C^2$ on $\R^3\setminus\{0\}$. Let
$\bar h := h_K|_{\mathbb{S}^2}$.
For each $\omega\in \mathbb{S}^2$, define the symmetric bilinear form
\[
Q_\omega := \nabla^2_{\mathbb{S}^2}\bar h(\omega)+\bar h(\omega)I
\]
on $T_\omega \mathbb{S}^2$, where $I$ denotes the identity map, and $\nabla^2_{\mathbb{S}^2}$ is the spherical Hessian.

We now prove \eqref{eq:euclid-sphere-relation} in Corollary \ref{prop:second-derivation-rhs}.

\begin{lemma}\label{lem:euclid-sphere}
Let $F:\mathbb R^3\setminus\{0\}\to\mathbb R$ be a $C^2$ function
homogeneous of degree $1$. Set
\[
\bar F:=F|_{\mathbb S^2},
\qquad
Q^F_\omega
:=\nabla^2_{\mathbb S^2}\bar F(\omega)+\bar F(\omega)I.
\]
Then, for every $\rho>0$, $\omega\in\mathbb S^2$, and
$X\in T_\omega\mathbb S^2$,
\begin{align}\label{eq:main}
D^2F(\rho\omega)[X,X]
=\frac1\rho Q^F_\omega[X,X].
\end{align}
In particular, for $F=h_K$ one has $Q^F_\omega=Q_\omega$, so this gives
\eqref{eq:euclid-sphere-relation}.
\end{lemma}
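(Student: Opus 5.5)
The plan is to compare the Euclidean Hessian of $F$ with the spherical Hessian of $\bar F$ along a geodesic through $\omega$, using homogeneity to handle both the radial scaling and the normal component of the acceleration. Fix $\omega\in\mathbb S^2$ and a tangent vector $X\in T_\omega\mathbb S^2$. By bilinearity and scaling we may assume $|X|=1$; the case $X=0$ is trivial.

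\textbf{Step 1: remove the factor $\rho$.} Since $F$ is $1$-homogeneous, differentiating $F(tx)=tF(x)$ twice in $x$ gives $t^2D^2F(tx)=tD^2F(x)$, that is,
\[
D^2F(\rho\omega)=\tfrac1\rho D^2F(\omega).
\]
So it suffices to prove $D^2F(\omega)[X,X]=Q^F_\omega[X,X]$.

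\textbf{Step 2: differentiate along the great circle.} Consider $\gamma(s)=\cos s\,\omega+\sin s\,X$. This is a unit-speed geodesic of $\mathbb S^2$ with $\gamma(0)=\omega$, $\gamma'(0)=X$, and $\gamma''(0)=-\omega$.

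Because $\gamma$ is a geodesic, its spherical acceleration vanishes, so
\[
\nabla^2_{\mathbb S^2}\bar F(X,X)=\frac{d^2}{ds^2}\Big|_{s=0}\bar F(\gamma(s)).
\]
On the other hand, the Euclidean chain rule gives
\[
\frac{d^2}{ds^2}\Big|_{s=0}F(\gamma(s))=D^2F(\omega)[X,X]+\nabla F(\omega)\cdot\gamma''(0)=D^2F(\omega)[X,X]-\nabla F(\omega)\cdot\omega.
\]

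\textbf{Step 3: Euler's identity.} Homogeneity gives $\nabla F(\omega)\cdot\omega=F(\omega)=\bar F(\omega)$. Equating the two expressions from Step 2,
\[
D^2F(\omega)[X,X]=\nabla^2_{\mathbb S^2}\bar F(X,X)+\bar F(\omega)|X|^2=Q^F_\omega[X,X].
\]
Combining this with Step 1 yields \eqref{eq:main}.

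\textbf{Expected obstacle.} The only delicate point is justifying that the spherical Hessian evaluated on $(X,X)$ equals the second derivative along a geodesic. This follows from the definition $X(Yh)-(\nabla_XY)h$ applied to the extension $Y=\gamma'$ along $\gamma$, whose covariant derivative $(\gamma'')^\top=(-\gamma)^\top$ is zero.

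Finally, applying the result with $F=h_K$, which is $C^2$ on $\mathbb R^3\setminus\{0\}$ and $1$-homogeneous, gives $Q^F_\omega=Q_\omega$ and hence \eqref{eq:euclid-sphere-relation}.
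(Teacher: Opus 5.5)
Your proof is correct and follows essentially the same route as the paper: differentiate $F$ twice along a geodesic of $\mathbb{S}^2$ through $\omega$, use $\gamma''(0)=-|X|^2\omega$ together with Euler's identity $\nabla F\cdot\omega=\bar F(\omega)$, and compare with the second derivative of $\bar F$ along the geodesic, which is the spherical Hessian. The only differences are cosmetic. You first remove $\rho$ via $D^2F(\rho\omega)=\rho^{-1}D^2F(\omega)$ and normalize $|X|=1$ using an explicit great circle, whereas the paper carries $\rho$ through the curve $c(t)=\rho\gamma(t)$; you also justify the geodesic/Hessian identification a bit more explicitly than the paper does.
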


\begin{proof}
Fix $\rho>0$, $\omega\in \mathbb{S}^2$, and $X\in T_\omega \mathbb{S}^2$. Let $\gamma(t)\subset \mathbb{S}^2$ be the geodesic satisfying $\gamma(0)=\omega$ and $\gamma'(0)=X$.
Since $|\gamma(t)|^2=1$, by differentiating twice, we have
\[
\gamma(t)\cdot \gamma''(t)=-|\gamma'(t)|^2.
\]
On the other hand, since \(\gamma\) is a geodesic on \(\mathbb{S}^2\), its Euclidean acceleration is normal to \(\mathbb{S}^2\). Hence $\gamma''(t)=a(t)\gamma(t)$
for some scalar function \(a(t)\). Therefore
\[
a(t)=\gamma(t)\cdot \gamma''(t)=-|\gamma'(t)|^2.
\]
Since geodesics have constant speed and \(\gamma'(0)=X\), we get
\[
\gamma''(t)=-|X|^2\gamma(t).
\]
In particular, $\gamma''(0)=-|X|^2\omega$.

Now define a curve in $\R^3$ by $c(t):=\rho\,\gamma(t)$. Then
\[
c(0)=\rho\omega,
\qquad
c'(0)=\rho X,
\qquad
c''(0)=\rho\gamma''(0)=-\rho |X|^2\omega.
\]

On the one hand, by the Euclidean chain rule,
\begin{align}\label{appendcurve}
\frac{d^2}{dt^2}F(c(t))\Big|_{t=0}
&=
D^2F(\rho\omega)[\rho X,\rho X]
+
\nabla F(\rho\omega)\cdot c''(0)\\
&=
\rho^2 D^2F(\rho\omega)[X,X]
-
\rho |X|^2\, \nabla F(\rho\omega)\cdot \omega. \nonumber
\end{align}

Since $F$ is homogeneous of degree $1$, Euler's identity gives
\[
\nabla F(x)\cdot x = F(x)
\qquad
(x\neq 0).
\]
Applying this at $x=\rho\omega$ and using the homogeneity for $F$, we obtain
\[
\nabla F(\rho\omega)\cdot (\rho\omega)=F(\rho\omega)=
\rho F(\omega)=\rho \bar F(\omega).
\]
Therefore
\[
\nabla F(\rho\omega)\cdot \omega = \bar F(\omega).
\]
Substituting this into \eqref{appendcurve} yields
\begin{equation}\label{eq:euclidean-side}
\frac{d^2}{dt^2}F(c(t))\Big|_{t=0}
=
\rho^2 D^2F(\rho\omega)[X,X]
-
\rho |X|^2 \bar F(\omega).
\end{equation}

On the other hand, since $c(t)=\rho\gamma(t)$ and $F$ is $1$-homogeneous,
\[
F(c(t))=F(\rho\gamma(t))=\rho F(\gamma(t))=\rho \bar F(\gamma(t)).
\]
Thus
\[
\frac{d^2}{dt^2}F(c(t))\Big|_{t=0}
=
\rho\,\frac{d^2}{dt^2}\bar F(\gamma(t))\Big|_{t=0}.
\]
Because $\gamma$ is a geodesic on $\mathbb{S}^2$ with initial velocity $X$, we have
\[
\frac{d^2}{dt^2}\bar F(\gamma(t))\Big|_{t=0}
=
\nabla^2_{\mathbb{S}^2}\bar F(\omega)[X,X].
\]
Therefore,
\begin{equation}\label{eq:spherical-side}
\frac{d^2}{dt^2}F(c(t))\Big|_{t=0}
=
\rho\,\nabla^2_{\mathbb{S}^2}\bar F(\omega)[X,X].
\end{equation}

Comparing \eqref{eq:euclidean-side} and \eqref{eq:spherical-side}, we obtain
\[
D^2F(\rho\omega)[X,X]
=
\frac{1}{\rho}\left(\nabla^2_{\mathbb{S}^2}\bar F(\omega)[X,X]+\bar F(\omega)|X|^2\right)
=
\frac{1}{\rho}\,Q^F_\omega[X,X],
\]
and this proves the result.
Applying the lemma to $F=h_K$ yields the desired identity.
\end{proof}

\bibliographystyle{plain}
\bibliography{mybib}

\end{document}